\DeclareMathOperator{\sheafhom}{\mathscr{H}\text{\kern -3pt {\calligra\large om}}\,}
\theoremstyle{plain}
\newtheorem{theorem}{Theorem}[section]
\newtheorem{corollary}[theorem]{Corollary}
\newtheorem{lemma}[theorem]{Lemma}
\newtheorem{proposition}[theorem]{Proposition}
\theoremstyle{definition}
\newtheorem{rem}[theorem]{Remark}
\def\ps@pprintTitle{%
  \let\@oddhead\@empty
  \let\@evenhead\@empty
  \let\@oddfoot\@empty
  \let\@evenfoot\@oddfoot
}
\title{Galois subspaces for projective varieties}
\author{Robert Auffarth}
\address{R. Auffarth \\Departamento de Matem\'aticas, Facultad de
Ciencias, Universidad de Chile, Santiago\\Chile}
\email{rfauffar@uchile.cl}
\keywords{}
\subjclass[2010]{}%
\DeclareFontFamily{U}{cbgreek}{}
\DeclareFontShape{U}{cbgreek}{m}{n}{
        <-6>    grmn0500
        <6-7>   grmn0600
        <7-8>   grmn0700
        <8-9>   grmn0800
        <9-10>  grmn0900
        <10-12> grmn1000
        <12-17> grmn1200
        <17->   grmn1728
      }{}
\DeclareFontShape{U}{cbgreek}{bx}{n}{
        <-6>    grxn0500
        <6-7>   grxn0600
        <7-8>   grxn0700
        <8-9>   grxn0800
        <9-10>  grxn0900
        <10-12> grxn1000
        <12-17> grxn1200
        <17->   grxn1728
      }{}
\newcommand{\normalorbold}{%
  \ifnum\pdf@strcmp{\math@version}{bold}=\z@ bx\else m\fi
}
\begin{document}

\maketitle

\begin{abstract}
Given an embedding of a projective variety into projective space, we study the structure of the space of all linear projections that, when composed with the embedding, give a Galois morphism from the variety to a projective space of the same dimension.
\end{abstract}

\section{Introduction}
Let $X$ be an irreducible projective variety of dimension $n$ over an algebraically closed field $k$ of characteristic zero, and let $\varphi:X\hookrightarrow\mathbb{P}^N$ be a non-degenerate embedding. If $W\in\mathbb{G}(N-n-1,N)$ is a linear subvariety of $\mathbb{P}^{N}$ of dimension $N-n-1$, we denote by $\pi_W:\mathbb{P}^N\dashrightarrow\mathbb{P}^n$ the linear projection induced by $W$ (which is well-defined modulo automorphisms of $\mathbb{P}^n)$. Following Yoshihara \cite{Yoshi}, we wish to study the set
\begin{equation}
\nonumber\mathbf{G}_{X,\varphi} :=\{W\in\mathbb{G}(N-n-1,N):\pi_W\circ\varphi\text{ is Galois}\},
\end{equation}
where we say that $\pi_W\circ\varphi$ is \textit{Galois} if the field extension $k(\mathbb{P}^n)\hookrightarrow k(X)$ induced by $\pi_W\circ\varphi$ is a finite Galois extension. We will denote by $\mathrm{Gal}(W)$ the Galois group of such an extension. Since $\mathrm{Gal}(W)$ acts on the function field of $X$, we can therefore identify $\mathrm{Gal}(W)$ as a finite subgroup of birational transformations of $X$. Since it is usually more complicated to work with birational transformations than automorphisms, we define the following sets:
\begin{eqnarray}
\nonumber\mathbf{G}_{X,\varphi}^{\mathrm{aut}}&:=&\{W\in\mathbf{G}_{X,\varphi}:\mathrm{Gal}(W)\leq\mathrm{Aut}(X), X/\mathrm{Gal}(W)\simeq\mathbb{P}^n\}\\
\nonumber\mathbf{G}_{X,\varphi}^\varnothing&:=&\{W\in\mathbf{G}_{X,\varphi}:W\cap\varphi(X)=\varnothing\}.
\end{eqnarray}

These sets, and especially $\mathbf{G}_{X,\varphi}^\varnothing$, have been extensively studied under the context of \textit{Galois embeddings}. Indeed, Yoshihara \cite{Yoshi} defines $\varphi$ to be a Galois embedding if $\mathbf{G}_{X,\varphi}^\varnothing\neq\varnothing$, and then proceeds to study Galois groups associated to such linear subspaces. In particular, he proves \cite[Representation 1]{Yoshi} that
\[\mathbf{G}_{X,\varphi}^\varnothing\subseteq\mathbf{G}_{X,\varphi}^{\mathrm{aut}}.\]
There have been several beautiful results established about $\mathbf{G}_{X,\varphi}^{\mathrm{aut}}$ during the past two decades, and in some cases this space even characterizes the pair $(X,\varphi)$ up to automorphisms of $\mathbb{P}^N$. For example, if $X$ is embedded as a normal hypersurface of dimension $n$ into $\mathbb{P}^{n+1}$, Yoshihara \cite{Yoshi3} (in the smooth case) and Fukasawa and Takahashi \cite{FT} (in the normal case) show that $\mathbf{G}_{X,\varphi}=\mathbf{G}_{X,\varphi}^{\mathrm{aut}}$, this space is finite, and they moreover find bounds for its cardinality and characterize those hypersurfaces that achieve the bounds they discover. For example, they prove that if the hypersurface is smooth, then $\mathbf{G}_{X,\varphi}\smallsetminus\mathbf{G}_{X,\varphi}^{\mathrm{aut}}$ is of cardinality at most $\lfloor n/2\rfloor+1$ and $\mathbf{G}_{X,\varphi}^\varnothing$ is of cardinality at most $n+2$, the latter bound being achieved solely in the case of a Fermat hypersurface. Many other similar characterizations have been established, and we recommend the interested reader search out the vast existing literature (for example \cite{Fuk}, \cite{Tak}, \cite{KLT} just to name a few). We remark that Yoshihara and Fukasawa maintain a webpage \cite{YFwebpage} with a very thorough list of open problems related to this theory, as well as an extensive list of references. In general, the strategy has usually been to study $\mathbf{G}_{X,\varphi}^{\mathrm{aut}}$ when it is finite, and understand up to what point its cardinality characterizes the pair $(X,\varphi)$.\\

On the other hand, there are instances where $\mathbf{G}_{X,\varphi}^{\mathrm{aut}}$ (or even $\mathbf{G}_{X,\varphi}^\varnothing$) is not finite. For example, in the case of smooth projective curves (where we note that $\mathbf{G}_{X,\varphi}^\mathrm{aut}=\mathbf{G}_{X,\varphi}$), the author and S. Rahausen study in \cite{AR} the structure of $\mathbf{G}_{X,\varphi}$ for $X=\mathbb{P}^1$ and $\varphi$ a Veronese embedding. The set $\mathbf{G}_{X,\varphi}$, as well as $\mathbf{G}_{X,\varphi}^\varnothing$, is explicitly described as a finite union of families of varying dimensions. In particular, it is shown that $\mathbf{G}_{X,\varphi}^\varnothing$ is infinite (although it does have the structure of quasiprojective variety). In \cite{AuffRah}, the same authors study $\mathbf{G}_{X,\varphi}$ for smooth projective curves of genus $\geq1$, and give an explicit description for the genus 1 case. In the latter article, it is shown that for $g\geq2$, the locus $\mathbf{G}_{X,\varphi}$ inside the Grassmann variety is a disjoint union of projective spaces. For the case $g=1$, it is shown that $\mathbf{G}_{X,\varphi}$ is the disjoint union of points and projective bundles over \'etale quotients of the same elliptic curve, and the number and dimensions of the components of $\mathbf{G}_{X,\varphi}$ are explicitly given in the genus 1 case when $\varphi$ is the embedding associated to a very ample complete linear system.  

The purpose of this article is to give a general structure theorem for  $\mathbf{G}^\mathrm{aut}_{X,\varphi}$ in the case of arbitrary irreducible projective varieties, thereby shedding substantial light on the problem of understanding Galois subspaces. The general idea behind our description of $\mathbf{G}_{X,\varphi}^{\mathrm{aut}}$ is the following:

\vspace{0.2cm}

\begin{enumerate}
\item If we start by fixing a finite group $G\leq\mathrm{Aut}(X)$ such that $X/G\simeq\mathbb{P}^n$, then we can explicitly describe the structure of the subset of $\mathbf{G}_{X,\varphi}^{\mathrm{aut}}$ that consists of all Galois subspaces whose Galois group is \textit{equal} to $G$. This turns out to be a projective space lying in $\mathbb{G}(N-n-1,N)$ that we denote by $\mathbb{P}_{G,\varphi}$ (see Proposition \ref{Galoisgroup}).\\

    \item Next, we can deform $G$ inside $\mathrm{Aut}(X)$ by conjugating it by $\mathrm{Aut}^0(X)$, the connected component of the automorphism group scheme of $X$ that contains the identity. It turns out that all these $\mathbb{P}_{\theta G\theta^{-1},\varphi}$ can be put together into a family (in a loose sense of the term) which can be described as a certain relative proj over a quotient of $\mathrm{Aut}^0(X)$ (see Section \ref{families}).
\end{enumerate}

\vspace{0.2cm}

Given this intuition, our main theorem says the following:

\begin{theorem}\label{main}
Let $X$ be an irreducible projective variety of dimension $n$, let $\varphi:X\hookrightarrow\mathbb{P}^N$ be a non-degenerate embedding, and let $\mathrm{Aut}^0(X)$ denote the irreducible component of the automorphism group scheme of $X$ that contains the identity. Then $\mathbf{G}_{X,\varphi}^{\mathrm{aut}}$ can be described as the disjoint union of relative proj over quotients of $\mathrm{Aut}^0(X)$. Specifically, for each finite subgroup $G\leq\mathrm{Aut}(X)$ such that $X/G\simeq\mathbb{P}^n$, if $N(G)$ denotes the normalizer of $G$ in $\mathrm{Aut}^0(X)$, there exists a coherent sheaf $\mathcal{E}_G$ on the scheme $\mathrm{Aut}^0(X)/N(G)$ and an embedding $\Phi_G:\mathbf{P}(\mathcal{E}_G)\hookrightarrow\mathbf{G}(N-n-1,N)$
whose image lies in $\mathbf{G}_{X,\varphi}^{\mathrm{aut}}$, such that
\[\mathbf{G}_{X,\varphi}^{\mathrm{aut}}=\bigcup_{G}\Phi_G(\mathbf{P}(\mathcal{E}_G)),\]
where the union runs over all subgroups as above. Moreover:
\begin{enumerate}
\item $\Phi_G(\mathbf{P}(\mathcal{E}_G))\cap\Phi_H(\mathbf{P}(\mathcal{E}_H))\neq\varnothing$ if and only if $G$ and $H$ are conjugate by an element of $\mathrm{Aut}^0(X)$, in which case $\Phi_G(\mathbf{P}(\mathcal{E}_G))=\Phi_H(\mathbf{P}(\mathcal{E}_H))$,
\item the fibers of $\mathbf{P}(\mathcal{E}_G)$ over $\mathrm{Aut}^0(X)/N(G)$ are either empty or projective spaces.\\
\end{enumerate}
\end{theorem}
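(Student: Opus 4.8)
The plan is to stratify $\mathbf{G}_{X,\varphi}^{\mathrm{aut}}$ by the Galois group, regroup the strata by $\mathrm{Aut}^0(X)$-conjugacy, and identify each resulting piece with the relative $\mathbf{P}(\mathcal{E}_G)$ of the theorem. By Proposition \ref{Galoisgroup}, for every finite $G\leq\mathrm{Aut}(X)$ with $X/G\simeq\mathbb{P}^n$ the locus $\mathbb{P}_{G,\varphi}=\{W\in\mathbf{G}_{X,\varphi}:\mathrm{Gal}(W)=G\}$ is a (possibly empty) projective subspace of $\mathbf{G}(N-n-1,N)$; since the deck group $\mathrm{Gal}(W)$ of $\pi_W\circ\varphi$ is a well-defined subgroup of $\mathrm{Aut}(X)$, distinct $G$'s give disjoint loci, and by the definition of $\mathbf{G}_{X,\varphi}^{\mathrm{aut}}$ this yields a disjoint decomposition $\mathbf{G}_{X,\varphi}^{\mathrm{aut}}=\bigsqcup_{G}\mathbb{P}_{G,\varphi}$ over all such subgroups. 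Everything then comes down to bundling these loci over $\mathrm{Aut}^0(X)$-conjugacy classes.

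Fix one $G$ and put $N(G)=N_{\mathrm{Aut}^0(X)}(G)$. I would first check that $N(G)$ is a closed subgroup of $\mathrm{Aut}^0(X)$ — it is the intersection over $g\in G$ of the fibres over $g\in\mathrm{Aut}(X)$ of the morphisms $\theta\mapsto\theta g\theta^{-1}$ — so that $S_G:=\mathrm{Aut}^0(X)/N(G)$ is a quasi-projective variety whose points correspond via $\theta\mapsto\theta G\theta^{-1}$ to the $\mathrm{Aut}^0(X)$-conjugates of $G$. The key point linking the various loci is the identity $\mathbb{P}_{\theta G\theta^{-1},\varphi}=\mathbb{P}_{G,\varphi\circ\theta}$: one has that $\pi_W\circ\varphi$ is $\theta G\theta^{-1}$-Galois precisely when $\pi_W\circ(\varphi\circ\theta)$ is $G$-Galois, by conjugating deck transformations by $\theta$ and comparing degrees, and moreover $\mathbb{P}_{G,\varphi\circ\theta}=\mathbb{P}_{G,\varphi\circ\theta\nu}$ for $\nu\in N(G)$, so this locus depends only on the class $[\theta]\in S_G$.

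Next I would run the construction of Proposition \ref{Galoisgroup} in families. Applied to the embedding $\varphi\circ\theta$, whose line bundle $\theta^*L$ and defining subspace $\theta^*V_\varphi\subseteq H^0(X,\theta^*L)$ (with $L=\varphi^*\mathcal{O}_{\mathbb{P}^N}(1)$ and $V_\varphi=\varphi^*H^0(\mathbb{P}^N,\mathcal{O}(1))$) are the fibrewise restrictions of the universal data $a^*L$ on $\mathrm{Aut}^0(X)\times X$ (here $a$ is the action morphism) and of its induced subsheaf, the recipe should globalize to a coherent sheaf on $\mathrm{Aut}^0(X)$ whose relative $\mathbf{P}$ has fibre $\mathbb{P}_{G,\varphi\circ\theta}$ over $\theta$. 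By the $N(G)$-invariance noted above together with faithfully flat descent along $\mathrm{Aut}^0(X)\to S_G$, it descends to a coherent sheaf $\mathcal{E}_G$ on $S_G$ such that $\mathbf{P}(\mathcal{E}_G)$ has fibre $\mathbb{P}_{\theta G\theta^{-1},\varphi}$ over $[\theta]$, which is empty or a projective space by Proposition \ref{Galoisgroup} — this is assertion (2). The fibrewise inclusions $\mathbb{P}_{\theta G\theta^{-1},\varphi}\hookrightarrow\mathbf{G}(N-n-1,N)$ are natural in $\theta$ and glue to a morphism $\Phi_G$; since the fibres of $\mathbf{P}(\mathcal{E}_G)$ land in the pairwise disjoint loci $\mathbb{P}_{G',\varphi}$, $\Phi_G$ is injective, and a routine check (it is injective and unramified) shows it is an embedding, with image $\bigcup_{G'\sim G}\mathbb{P}_{G',\varphi}$.

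It remains to assemble and to prove (1). Since $\mathrm{im}\,\Phi_G=\bigcup_{G'\sim_{\mathrm{Aut}^0(X)}G}\mathbb{P}_{G',\varphi}$ and the $\mathbb{P}_{G',\varphi}$ are pairwise disjoint, $\mathrm{im}\,\Phi_G$ and $\mathrm{im}\,\Phi_H$ meet if and only if they share some $\mathbb{P}_{G',\varphi}$, i.e. if and only if $G$ and $H$ are conjugate by an element of $\mathrm{Aut}^0(X)$, in which case they have the same set of conjugates and $\mathrm{im}\,\Phi_G=\mathrm{im}\,\Phi_H$; combined with the decomposition of the first paragraph this gives $\mathbf{G}_{X,\varphi}^{\mathrm{aut}}=\bigcup_G\Phi_G(\mathbf{P}(\mathcal{E}_G))$. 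The main obstacle I foresee is the globalization step: turning the set-theoretic family $\{\mathbb{P}_{G,\varphi\circ\theta}\}_\theta$ of projective spaces into an honest coherent sheaf, since these loci can drop dimension and become empty. Concretely, one must realize $\mathcal{E}_G$ as, say, the cokernel of a morphism of vector bundles on $\mathrm{Aut}^0(X)$ encoding the conditions "$W$ lies in $\theta^*V_\varphi$ and defines a birational map to $\mathbb{P}^n$" of Proposition \ref{Galoisgroup}, and verify that its formation commutes with base change; checking that $\Phi_G$ is an embedding rather than merely an injective morphism is a secondary technical point.
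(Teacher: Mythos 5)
Your overall strategy coincides with the paper's: stratify $\mathbf{G}_{X,\varphi}^{\mathrm{aut}}$ by the (well-defined) Galois group, identify each stratum with $\mathbb{P}_{G,\varphi}$ via Proposition \ref{Galoisgroup}, observe that $\mathbb{P}_{\theta G\theta^{-1},\varphi}$ depends only on the class of $\theta$ modulo $N(G)$, and assemble the strata within a fixed $\mathrm{Aut}^0(X)$-conjugacy class into a relative proj over $\mathrm{Aut}^0(X)/N(G)$. The disjointness argument for part (1) and the identification of the fibers for part (2) are likewise the same in both arguments.

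The step you leave unresolved, however, is the one that carries the actual content of the theorem: producing a coherent sheaf $\mathcal{E}_G$ whose relative proj has the loci $\mathbb{P}_{\theta G\theta^{-1},\varphi}$ as fibers. You correctly flag this as ``the main obstacle'' and propose realizing $\mathcal{E}_G$ as the cokernel of a map of vector bundles commuting with base change, but you never construct that map, and without it the existence of $\mathcal{E}_G$ --- hence the statement being proved --- is not established; the whole difficulty is that $h^0$ jumps, so the fibers do not form a bundle. The paper resolves this by forming the fiber product $\mathrm{Aut}^0(X)\times_{\mathbf{Pic}(X)}\mathbf{Div}_{\geq0}(X)$ along the Abel map $D\mapsto\mathcal{O}_X(D)$ and the morphism $\theta\mapsto\mathcal{L}\otimes\theta^*\pi_G^*\mathcal{O}(-1)$, and invoking \cite[Prop.~8.2.7]{Neron}, which states precisely that such a fiber product is $\mathbf{P}(\mathcal{F}_G)$ for a coherent sheaf $\mathcal{F}_G$ on the base; $\mathcal{E}_G$ is then obtained by descending $\mathcal{F}_G$ along the free $N(G)$-action. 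Supplying that citation (or an equivalent cohomology-and-base-change construction of $\mathcal{F}_G$) is what your argument needs to close. Two secondary caveats: when $V$ is not complete the fibers must also be cut out family-wise by the linear condition $s\otimes H^0(X,\pi_G^*\mathcal{O}(1))^G\subseteq V$, which your globalization should encode; and ``injective and unramified'' does not by itself yield an immersion for non-proper schemes (monomorphisms need not be immersions), which is why the paper reduces this point to the explicit differential computation of Lemma \ref{embedding}.
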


Here we use the notation $\mathbf{P}(\mathcal{E}_G):=\mathrm{Proj}(\mathrm{Sym}(\mathcal{E}_G))$. Note that since $\mathcal{E}_G$ is coherent and not necessarily locally free, $\mathbf{P}(\mathcal{E}_G)$ is not necessarily irreducible, and indeed there are examples where it is not irreducible (see Remark \ref{not irreducible}).

Note that each $\mathbf{P}(\mathcal{E}_G)$ is a family (in a loose sense of the term) of projective spaces over a quotient of $\mathrm{Aut}^0(X)$. This result therefore clarifies the results obtained for curves in genus greater than or equal to 1 in \cite{AuffRah}, since if $X$ is a genus 1 curve, then $\mathrm{Aut}^0(X)=X$, and if it is of genus greater than 1, then $\mathrm{Aut}^0(X)=\{1\}$. As an immediate corollary we obtain the following:

\begin{corollary}
If $\mathrm{Aut}(X)$ is finite, then $\mathbf{G}_{X,\varphi}^{\mathrm{aut}}$ is a finite union of projective spaces.
\end{corollary}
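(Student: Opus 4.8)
The plan is to deduce the corollary directly from Theorem~\ref{main} by observing that the finiteness hypothesis collapses every base scheme occurring there to a point. First I would recall that $\mathrm{Aut}^0(X)$ is the identity component of the automorphism group scheme of $X$. If $\mathrm{Aut}(X)$ is finite, then $\mathrm{Aut}^0(X)$ is a finite connected group scheme over $k$; since $\mathrm{char}\, k = 0$, Cartier's theorem guarantees that $\mathrm{Aut}^0(X)$ is reduced, and a reduced connected finite scheme over the algebraically closed field $k$ is a single point. Hence $\mathrm{Aut}^0(X) = \mathrm{Spec}\, k$.

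Consequently, for every finite subgroup $G \leq \mathrm{Aut}(X)$ with $X/G \simeq \mathbb{P}^n$, the normalizer $N(G)$ of $G$ in $\mathrm{Aut}^0(X)$ is trivial, so the quotient $\mathrm{Aut}^0(X)/N(G)$ is again $\mathrm{Spec}\, k$. The coherent sheaf $\mathcal{E}_G$ on $\mathrm{Spec}\, k$ is then just a finite-dimensional $k$-vector space $V_G$, and $\mathbf{P}(\mathcal{E}_G) = \mathrm{Proj}(\mathrm{Sym}(V_G))$ is either empty (when $V_G = 0$) or the projective space $\mathbb{P}(V_G)$. Since $\Phi_G$ is an embedding by Theorem~\ref{main}, the image $\Phi_G(\mathbf{P}(\mathcal{E}_G))$ inside $\mathbf{G}(N-n-1,N)$ is therefore empty or a projective space.

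Finally, since $\mathrm{Aut}(X)$ is finite it has only finitely many subgroups, so only finitely many $G$ appear in the union $\mathbf{G}_{X,\varphi}^{\mathrm{aut}} = \bigcup_{G}\Phi_G(\mathbf{P}(\mathcal{E}_G))$ of Theorem~\ref{main}. Discarding the empty terms then exhibits $\mathbf{G}_{X,\varphi}^{\mathrm{aut}}$ as a finite union of projective spaces, which is the assertion. I do not expect any genuine obstacle here: the argument is a formal specialization of the main theorem, and the only step deserving an explicit word of justification is the vanishing $\mathrm{Aut}^0(X) = \mathrm{Spec}\, k$, which rests on the characteristic-zero hypothesis forcing the finite connected group scheme $\mathrm{Aut}^0(X)$ to be reduced and hence trivial.
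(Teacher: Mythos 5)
Your argument is correct and is exactly the intended one: the paper presents this as an immediate consequence of Theorem~\ref{main} without writing out a proof, and the content is precisely your observation that $\mathrm{Aut}(X)$ finite forces $\mathrm{Aut}^0(X)=\mathrm{Spec}\,k$ (reduced by Cartier in characteristic zero), so each $\mathbf{P}(\mathcal{E}_G)$ is a single fiber, hence empty or a projective space by part (2) of the theorem, and only finitely many subgroups $G$ contribute.
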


The structure of this article is as follows: In Section \ref{linear} we prove a general result on linear projections between two relative proj over a fixed noetherian scheme. This result could be applied, in the future, to more general versions of Galois subspaces and Galois embeddings (for example in a relative setting, or for general Noetherian schemes over an arbitrary base). In Section \ref{galois} we describe the set of all Galois subspaces with a fixed Galois group, and in Section \ref{families} we deform the group and proceed to prove the structure theorem mentioned above. Section \ref{intermediate} shows an interesting way of seeing Galois subspaces and clarifies a previous strategy used to understand Galois subspaces in a previous article. In Section \ref{questions} we show two examples and ask several questions for future research.

\vspace{0.5cm}

\noindent\textit{Acknowledgements:} The author was partially supported by ANID-Fondecyt Grant 1220997.

\section{A general result on linear projections}\label{linear}

In this section we will prove a general result on linear projections between relative proj. The purpose of establishing this result in such generality is that it can be readily applied to the context we are in, but it can also be used to study much more general contexts in the future. 

Let $S$ be a fixed noetherian scheme, let $X$ be an integral noetherian scheme over $S$ with structural morphism $f:X\to S$, let $\mathcal{A}_1$ and $\mathcal{A}_2$ be graded quasi-coherent $\mathcal{O}_S$-algebras, and let $\varphi_i:X\to\mathbf{Proj}(\mathcal{A}_i)=:Y_i$ be $S$-morphisms for $i=1,2$. We recall that if $\xi:\mathcal{A}_2\to\mathcal{A}_1$ is a morphism of graded $\mathcal{O}_S$-algebras, then by \cite[Section 3.5.1]{EGAII}, there exists a (possibly empty) open subscheme $\mathbf{G}(\xi)\subseteq Y_1$ and an induced morphism 
\[\mathbf{P}(\xi):\mathbf{G}(\xi)\to Y_2\]
that we will call the \textit{rational linear map} associated to $\xi$. If we do not wish to specify the open subscheme $\mathbf{G}(\xi)$ we will simply write $\mathbf{P}(\xi):Y_1\dashrightarrow Y_2$.

We will assume that $\mathcal{A}_1$ and $\mathcal{A}_2$ satisfy the following property that we will denote by $(\star)$:
\begin{enumerate}
    \item The degree 0 part is isomorphic to $\mathcal{O}_S$
    \item The degree 1 part is a coherent $\mathcal{O}_S$-module
    \item The whole sheaf is locally generated by the degree 1 part as an $\mathcal{O}_S$-algebra.
\end{enumerate}

If $\mathcal{B}$ is a graded quasi-coherent $\mathcal{O}_S$-algebra that satisfies $(\star)$, then $\mathbf{Proj}(\mathcal{B})$ represents the functor that associates to $f:T\to S$ the set of pairs $(\mathcal{L},\psi)$ modulo $\sim$, where $\mathcal{L}$ is an invertible $\mathcal{O}_T$-module,
\[\psi:f^*\mathcal{B}\to\bigoplus_{n\geq 0}\mathcal{L}^{\otimes n}\]
is a graded $\mathcal{O}_T$-algebra homomorphism such that $f^*\mathcal{B}_1\to\mathcal{L}$ is surjective and $(\mathcal{L}_1,\psi_1)\sim(\mathcal{L}_2,\psi_2)$ if and only if there is an isomorphism $\beta:\mathcal{L}_2\to\mathcal{L}_1$ such that $\beta\psi_2=\psi_1$. Note that the morphism $T\to\mathbf{Proj}(\mathcal{B})$ induced by the above data is just the composition of
\[\mathbf{P}(\psi):X=\mathbf{Proj}\left(\bigoplus_{n\geq0}\mathcal{L}^{\otimes n}\right)\to\mathbf{Proj}(f^*\mathcal{B})=\mathbf{Proj}(\mathcal{B})\times_SX\]
with the first projection.

Assume that $\varphi_i:X\to Y_i$ is induced by a pair $(\mathcal{L}_i,\psi_i)$. The main result of this section is the following:

\begin{theorem}\label{general theorem}
A morphism of graded $\mathcal{O}_S$-algebras $\xi:\mathcal{A}_2\to\mathcal{A}_1$ satisfies $\mathbf{P}(\xi)\varphi_1=\varphi_2$ on $\varphi_1^{-1}(\mathbf{G}(\xi))$ if and only if there exists $s\in\mathrm{Hom}_{\mathcal{O}_X}(\mathcal{L}_2,\mathcal{L}_1)$ such that the following diagram is commutative:
\begin{equation}\nonumber\xymatrix{(f^*\mathcal{A}_2)_1\ar[d]_{\psi_2}\ar[rr]^{f^*\xi}&&(f^*\mathcal{A}_1)_1\ar[d]^{\psi_1}\\\mathcal{L}_2\ar[rr]_{s}&&\mathcal{L}_1}.\end{equation}
\end{theorem}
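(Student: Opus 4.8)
The plan is to translate both sides of the equation $\mathbf{P}(\xi)\varphi_1 = \varphi_2$ into the functorial language in which $\mathbf{Proj}(\mathcal{A}_i)$ is described, and then read off what commutativity of the square means. Recall that $\varphi_i : X \to Y_i$ corresponds to the pair $(\mathcal{L}_i, \psi_i)$, where $\psi_i : f^*\mathcal{A}_i \to \bigoplus_{n \geq 0} \mathcal{L}_i^{\otimes n}$ is a graded $\mathcal{O}_X$-algebra homomorphism with $(f^*\mathcal{A}_i)_1 \to \mathcal{L}_i$ surjective; since $\mathcal{A}_i$ is locally generated in degree $1$ by $(\star)$, the whole of $\psi_i$ is determined by its degree-$1$ part $\psi_{i,1} : (f^*\mathcal{A}_i)_1 \to \mathcal{L}_i$. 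So the first step is to pin down precisely what $\mathbf{P}(\xi)\varphi_1$ is as a morphism $\varphi_1^{-1}(\mathbf{G}(\xi)) \to Y_2$: by the construction in \cite[Section 3.5.1]{EGAII}, the composite corresponds to the pair $(\mathcal{L}_1|_{\varphi_1^{-1}(\mathbf{G}(\xi))}, \psi_1 \circ f^*\xi)$ — that is, one pulls back $\xi : \mathcal{A}_2 \to \mathcal{A}_1$ to $f^*\xi : f^*\mathcal{A}_2 \to f^*\mathcal{A}_1$ and postcomposes with $\psi_1$. The point where $\mathbf{P}(\xi)$ is defined on $\varphi_1(X)$ is exactly the locus where $(f^*\mathcal{A}_2)_1 \xrightarrow{f^*\xi} (f^*\mathcal{A}_1)_1 \xrightarrow{\psi_{1,1}} \mathcal{L}_1$ is still surjective, i.e. where the section it determines does not vanish; call this open set $U \subseteq X$, and note $U = \varphi_1^{-1}(\mathbf{G}(\xi))$.

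The second step is the comparison. By the universal property, two pairs $(\mathcal{M}, \chi)$ and $(\mathcal{M}', \chi')$ over $U$ induce the same morphism $U \to Y_2$ if and only if they are related by $\sim$, i.e. there is an isomorphism $\beta : \mathcal{M}' \to \mathcal{M}$ with $\beta \chi' = \chi$. Applying this with $(\mathcal{M}, \chi) = (\mathcal{L}_1|_U, (\psi_1 \circ f^*\xi)|_U)$ — which represents $\mathbf{P}(\xi)\varphi_1$ — and $(\mathcal{M}', \chi') = (\mathcal{L}_2|_U, \psi_2|_U)$ — which represents $\varphi_2|_U$ — we get: $\mathbf{P}(\xi)\varphi_1 = \varphi_2$ on $U$ iff there is an isomorphism $\beta : \mathcal{L}_2|_U \to \mathcal{L}_1|_U$ with $\beta \circ \psi_2|_U = \psi_1 \circ f^*\xi|_U$ in degree $1$ (degree $1$ suffices by $(\star)$). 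Now a morphism $\mathcal{L}_2 \to \mathcal{L}_1$ of invertible sheaves on the integral scheme $X$ is, after choosing local trivializations, a regular function; it is an isomorphism over $U$ iff, viewed as a global section $s \in \mathrm{Hom}_{\mathcal{O}_X}(\mathcal{L}_2, \mathcal{L}_1)$, it is nonvanishing on $U$. The square in the statement is precisely the assertion $\psi_{1,1} \circ f^*\xi = s \circ \psi_{2,1}$; one direction is immediate (take $s = \beta$ extended by zero across $X \setminus U$, using that $X$ is integral so that a section defined on the dense open $U$ extends — or rather, we argue the other way: $s$ is globally defined from the start), and for the converse one checks that the commutativity of the square forces $s$ to be nonvanishing exactly on $U = \varphi_1^{-1}(\mathbf{G}(\xi))$, because the image of $\psi_{2,1}$ generates $\mathcal{L}_2$ and the image of $\psi_{1,1}$ generates $\mathcal{L}_1$, so $s$ is surjective (hence an isomorphism of line bundles) over precisely the locus where $\psi_{1,1}\circ f^*\xi$ is surjective.

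The main obstacle I anticipate is the bookkeeping with the open set $\mathbf{G}(\xi)$ and its preimage: one must be careful that the existence of a \emph{global} $s \in \mathrm{Hom}_{\mathcal{O}_X}(\mathcal{L}_2, \mathcal{L}_1)$ making the square commute is equivalent to the existence of an isomorphism $\beta$ making it commute \emph{over $U$}, and that the $U$ arising this way is indeed $\varphi_1^{-1}(\mathbf{G}(\xi))$ and not something smaller. Here integrality of $X$ is the key hypothesis: it guarantees that $\mathrm{Hom}_{\mathcal{O}_X}(\mathcal{L}_2, \mathcal{L}_1) \hookrightarrow \mathrm{Hom}_{\mathcal{O}_U}(\mathcal{L}_2|_U, \mathcal{L}_1|_U)$ when $U$ is dense (a homomorphism on a dense open of an integral scheme extends uniquely if it extends at all, and here it does because both sheaves are locally free of rank one and the gluing is over an integral base), so that the datum of $s$ globally and the datum of $\beta$ over $U$ are interchangeable. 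Modulo this, everything reduces to unwinding the functor-of-points description, which is why establishing Theorem \ref{general theorem} in the functorial setup of $(\star)$ is the natural level of generality. I would also remark that no completeness or properness is needed — only that $(\star)$ holds, so that $\mathbf{Proj}$ represents the stated functor and graded homomorphisms are determined in degree $1$.
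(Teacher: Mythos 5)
Your overall strategy is the same as the paper's: identify the composite $\mathbf{P}(\xi)\varphi_1$ on $U=\varphi_1^{-1}(\mathbf{G}(\xi))$ with the pair $(\mathcal{L}_1|_U,\psi_1\circ f^*\xi)$ (the paper derives this from Lemmas A.2 and A.3 of Altman--Kleiman rather than quoting EGA~II directly), invoke the equivalence relation in the functor of points to obtain an isomorphism $\beta:\mathcal{L}_2|_U\to\mathcal{L}_1|_U$ with $\beta\circ\psi_{2,1}=\psi_{1,1}\circ f^*\xi$ over $U$, and then trade $\beta$ for a global $s$. Your converse direction is also the paper's: since $\psi_{2,1}$ is surjective, the relation $s\circ\psi_{2,1}=\psi_{1,1}\circ f^*\xi$ forces the non-vanishing locus of $s$ to equal the locus where $\psi_{1,1}\circ f^*\xi$ is surjective, which is $\varphi_1^{-1}(\mathbf{G}(\xi))$; this is the content of Lemma \ref{D(s)}.

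The one step whose justification does not hold up is exactly the one you flag as the main obstacle: passing from $\beta$ on $U$ to a global $s\in\mathrm{Hom}_{\mathcal{O}_X}(\mathcal{L}_2,\mathcal{L}_1)$. Your stated reason --- that a homomorphism of line bundles on a dense open of an integral scheme extends ``because both sheaves are locally free of rank one'' --- is false in general: take $\mathcal{L}_2=\mathcal{L}_1=\mathcal{O}_X$ and multiplication by a rational function with a pole along $X\smallsetminus U$. (Integrality gives \emph{uniqueness} of an extension, not existence, and ``extension by zero'' is not meaningful here.) The correct argument, which is the paper's extension lemma, uses data you already have in hand: $\psi_{2,1}:(f^*\mathcal{A}_2)_1\to\mathcal{L}_2$ is globally surjective, so one may \emph{define} $s$ by $s(\psi_{2,1}(a)):=\psi_{1,1}(f^*\xi(a))$, provided $\ker\psi_{2,1}\subseteq\ker(\psi_{1,1}\circ f^*\xi)$; and this kernel inclusion holds because for $a\in\ker\psi_{2,1}$ the section $\psi_{1,1}(f^*\xi(a))$ restricts on the dense open $U$ to $\beta(\psi_{2,1}(a)|_U)=0$, hence vanishes identically since $X$ is integral. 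With that replacement your argument is complete and follows the paper's route; note also that both you and the paper implicitly assume $U\neq\varnothing$ at this point, so the density of $U$ is available.
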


The rest of this section will be spent proving this result. 

Let $\xi:\mathcal{A}_2\to\mathcal{A}_1$ be a morphism of graded $\mathcal{O}_S$-algebras such that $\mathbf{P}(\xi)\varphi_1=\varphi_2$ on $\varphi_1^{-1}(\mathbf{G}(\xi))$, and let $\pi_i:Y_i\to S$ be the structure morphism. By \cite[Lemma A.2]{Kleiman}, there exist morphisms of graded $\mathcal{O}_{Y_1}$-algebras 
\[\alpha:\pi_1^*\mathcal{A}_1\to \mathrm{Sym}(\mathcal{O}_{Y_1}(1))\] 
\[\eta:\mathbf{P}(\xi)|_{\mathbf{G}(\xi)}^*\mathrm{Sym}(\mathcal{O}_{Y_2}(1))\to\mathrm{Sym}(\mathcal{O}_{Y_1}(1))|_{\mathbf{G}(\xi)}\]
such that the diagram
\[\xymatrix{\pi_1^*\mathcal{A}_2|_{\mathbf{G}(\xi)}\ar[d]_{\mathbf{P}(\xi)^*\alpha}\ar[rr]^{\pi_1^*\xi}&&\pi_1^*\mathcal{A}_1|_{\mathbf{G}(\xi)}\ar[d]^\alpha\\
\mathbf{P}(\xi)|_{\mathbf{G}(\xi)}^*\mathrm{Sym}(\mathcal{O}_{Y_2}(1))\ar[rr]^\eta&&\mathrm{Sym}(\mathcal{O}_{Y_1}(1))|_{\mathbf{G}(\xi)}}\]
is commutative. By \cite[Lemma A.3]{Kleiman}, we have that the morphism $\mathbf{P}(\alpha)$ associated to $\alpha$ can be identified with the diagonal morphism $\mathbf{P}(\alpha):Y_1\to Y_1\times_S Y_1$.

\begin{lemma}
The morphism $\mathbf{P}(\xi)|_{\mathbf{G}(\xi)}:\mathbf{G}(\xi)\to Y_2$ is given by the pair $(\mathcal{O}_{Y_1}(1)|_{\mathbf{G}(\xi)},\alpha\pi_1^*\xi)$. 
\end{lemma}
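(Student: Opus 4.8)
The plan is to unwind the universal property of $\mathbf{Proj}$ and compare the tautological data defining $\mathbf{P}(\xi)|_{\mathbf{G}(\xi)}$ with the data of the pair $(\mathcal{O}_{Y_1}(1)|_{\mathbf{G}(\xi)},\alpha\pi_1^*\xi)$. Since $\mathcal{A}_2$ satisfies $(\star)$, the scheme $Y_2=\mathbf{Proj}(\mathcal{A}_2)$ represents the functor sending a scheme $g\colon T\to S$ to the set of equivalence classes of pairs $(\mathcal{M},\psi)$ with $\mathcal{M}$ invertible on $T$ and $\psi\colon g^*\mathcal{A}_2\to\bigoplus_{n\ge0}\mathcal{M}^{\otimes n}$ a graded $\mathcal{O}_T$-algebra homomorphism whose degree-one part is surjective. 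So it suffices to identify, for $T=\mathbf{G}(\xi)$ and $g=\pi_1|_{\mathbf{G}(\xi)}$, the pair that $\mathbf{P}(\xi)|_{\mathbf{G}(\xi)}$ pulls the universal pair back to, and to check it agrees up to $\sim$ with $(\mathcal{O}_{Y_1}(1)|_{\mathbf{G}(\xi)},\alpha\pi_1^*\xi)$.

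First I would recall that by \cite[Lemma A.3]{Kleiman} applied to $\alpha$, the morphism $Y_1\to\mathbf{Proj}(\mathcal{A}_1)\times_S Y_1$ associated to $\alpha\colon\pi_1^*\mathcal{A}_1\to\mathrm{Sym}(\mathcal{O}_{Y_1}(1))$ is the graph of the identity, i.e. $\alpha$ is precisely the tautological datum exhibiting $\mathrm{id}\colon Y_1\to\mathbf{Proj}(\mathcal{A}_1)=Y_1$ via the pair $(\mathcal{O}_{Y_1}(1),\alpha)$; this is the statement already recorded in the excerpt that $\mathbf{P}(\alpha)$ is the diagonal. Next I would invoke the construction of $\mathbf{P}(\xi)$ from \cite[Section 3.5.1]{EGAII} together with \cite[Lemma A.2]{Kleiman}: the commutative square with $\mathbf{P}(\xi)^*\alpha$, $\pi_1^*\xi$, $\alpha$, and $\eta$ exhibits exactly that pulling back the universal datum on $Y_2$ along $\mathbf{P}(\xi)|_{\mathbf{G}(\xi)}$ yields, on $\mathbf{G}(\xi)$, the composite $\mathbf{G}(\xi)\hookrightarrow Y_1\xrightarrow{\alpha}$ followed by $\pi_1^*\xi$ — that is, the graded homomorphism $\pi_1^*\mathcal{A}_2|_{\mathbf{G}(\xi)}\xrightarrow{\pi_1^*\xi}\pi_1^*\mathcal{A}_1|_{\mathbf{G}(\xi)}\xrightarrow{\alpha}\mathrm{Sym}(\mathcal{O}_{Y_1}(1))|_{\mathbf{G}(\xi)}$, whose target invertible sheaf is $\mathcal{O}_{Y_1}(1)|_{\mathbf{G}(\xi)}$. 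By definition of $\mathbf{G}(\xi)$ this is exactly the open locus where the degree-one part $\alpha_1\circ(\pi_1^*\xi)_1$ is surjective onto $\mathcal{O}_{Y_1}(1)$, so the pair $(\mathcal{O}_{Y_1}(1)|_{\mathbf{G}(\xi)},\,\alpha\pi_1^*\xi)$ is a legitimate $T$-point datum, and by the universal property it represents $\mathbf{P}(\xi)|_{\mathbf{G}(\xi)}$. To finish I would note that the representing datum is unique only up to the relation $\sim$, and check that the isomorphism class is well-defined: since $\mathbf{P}(\xi)^*\alpha$ and the composite $\alpha\circ\pi_1^*\xi$ differ only by the choice of $\eta$, which is itself determined by \cite[Lemma A.2]{Kleiman} up to the same equivalence, there is nothing further to verify.

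The main obstacle, I expect, will be purely bookkeeping: carefully matching the open subscheme $\mathbf{G}(\xi)$ produced abstractly in \cite[Section 3.5.1]{EGAII} with the surjectivity locus of $\alpha_1\circ(\pi_1^*\xi)_1$, and confirming that the graded homomorphism $\alpha\pi_1^*\xi$ really does have $\mathcal{O}_{Y_1}(1)|_{\mathbf{G}(\xi)}$ (and not some twist) as the invertible sheaf in the representing pair. Both points are essentially forced by \cite[Lemmas A.2 and A.3]{Kleiman} once the diagram in the excerpt is in hand, so the proof should be short: restrict the commutative square to $\mathbf{G}(\xi)$, read off that the left-hand vertical composite equals $\alpha\pi_1^*\xi$, and invoke representability of $Y_2$.
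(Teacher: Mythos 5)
Your proof is correct, but it reaches the conclusion by a different mechanism than the paper. The paper uses the recipe recorded just before Theorem \ref{general theorem}: the morphism attached to a pair $(\mathcal{L},\psi)$ is the composite of $\mathbf{P}(\psi)$ with the first projection. It then evaluates this for $\psi=\alpha\pi_1^*\xi$ by factoring $\mathbf{P}(\alpha\pi_1^*\xi)=\mathbf{P}(\pi_1^*\xi)\circ\mathbf{P}(\alpha)$, identifying $\mathbf{P}(\alpha)$ with the diagonal $Y_1\to Y_1\times_S Y_1$ via \cite[Lemma A.3]{Kleiman} and $\mathbf{P}(\pi_1^*\xi)$ with $\mathbf{P}(\xi)\times\mathrm{id}$, so that projecting to the first factor returns $\mathbf{P}(\xi)|_{\mathbf{G}(\xi)}$ on the nose. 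You instead work on the functor of points: you pull the universal pair on $Y_2$ back along $\mathbf{P}(\xi)|_{\mathbf{G}(\xi)}$ and read off from the commutative square of \cite[Lemma A.2]{Kleiman} that $\eta$ realizes the equivalence $\sim$ between that pullback and $(\mathcal{O}_{Y_1}(1)|_{\mathbf{G}(\xi)},\alpha\pi_1^*\xi)$. Both routes are sound and lean on the same two background lemmas, just on different halves of them. Your version has the small advantage of isolating the one genuine verification, namely that the degree-one part $\eta_1:\mathbf{P}(\xi)^*\mathcal{O}_{Y_2}(1)\to\mathcal{O}_{Y_1}(1)|_{\mathbf{G}(\xi)}$ is an isomorphism; you should say explicitly that this holds because it is a surjection of invertible sheaves (surjectivity following from that of $\alpha_1\circ(\pi_1^*\xi)_1$ on $\mathbf{G}(\xi)$), rather than fold it into ``nothing further to verify,'' but this is a presentational point and not a gap. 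The paper's version trades this for the geometric identification of $\mathbf{P}(\alpha)$ as the diagonal, which makes the proof a one-line diagram chase.
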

\begin{proof}
This is a simple exercise of putting everything together. Indeed, we have that $\mathbf{P}(\pi_1^*\xi)|_{\mathbf{G}(\xi)}$ is the natural morphism
\[\mathbf{P}(\xi)\times\mathrm{id}:\mathbf{G}(\xi)\times_SY_1\to Y_2\times_S Y_2,\]
and by composing $\mathbf{P}(\pi_1^*\xi)\mathbf{P}(\alpha)|_{\mathbf{G}(\xi)}$ with the first projection, we get what we are looking for.
\end{proof}

Define $U:=\varphi_1^{-1}(\mathbf{G}(\xi))$. By pulling back $\alpha\pi_1^*(\xi)$ by $\varphi_1$ and noting that $\varphi_1^*\alpha=\psi_1$, we have that on $U$ the composition $\mathbf{P}(\xi)\varphi_1$ is given by the morphism
\[\varphi_1^*(\alpha\pi_1^*\xi)=\psi_1(f^*\xi):f^*\mathcal{A}_2|_U\to\bigoplus_{n\geq0}\mathcal{L}_1|_U^{\otimes n}.\]
Since on $U$ we have that $\mathbf{P}(\xi)\varphi_1=\varphi_2$, we have that there exists an isomorphism $\beta:\mathcal{L}_2|_U\to\mathcal{L}_1|_U$ such that the diagram
\begin{equation}\nonumber\xymatrix{f^*\mathcal{A}_2\ar[d]_{\psi_2}\ar[rr]^{f^*\xi}&&f^*\mathcal{A}_1\ar[d]^{\psi_1}\\\bigoplus_{n\geq0}\mathcal{L}_2|_U^{\otimes n}\ar[rr]_{\oplus_n\beta^n}&&\bigoplus_{n\geq0}\mathcal{L}_1|_U^{\otimes n}}\end{equation}
commutes. Note that by our hypotheses, it is enough to consider the degree 1 part of the previous diagram.

\begin{lemma}
The isomorphism $\beta$ can be extended to a sheaf homomorphism $\mathcal{L}_2\to\mathcal{L}_1$ that we will denote by $s$.
\end{lemma}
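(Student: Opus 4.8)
The plan is to exploit the fact that $\beta$ is already defined on the open set $U=\varphi_1^{-1}(\mathbf{G}(\xi))$, and that $X$ is integral, to extend it to all of $X$ by a Hartogs-type / torsion-free argument. First I would observe that $\mathrm{Hom}_{\mathcal{O}_X}(\mathcal{L}_2,\mathcal{L}_1)\cong H^0(X,\mathcal{L}_2^{\vee}\otimes\mathcal{L}_1)$, since $\mathcal{L}_2$ is invertible. So the question is whether a section of the invertible sheaf $\mathcal{M}:=\mathcal{L}_2^{\vee}\otimes\mathcal{L}_1$ over $U$ extends to a section over $X$. This is emphatically \emph{not} automatic in general (e.g.\ if $U$ is the complement of a divisor on which $\mathcal{M}$ has a pole), so the real content is to show that $U$ is large enough — specifically that $X\setminus U$ has codimension $\geq 2$ in $X$, which together with $X$ being integral (hence $\mathcal{M}$ torsion-free, and normal after reduction to the normalization if needed) gives the extension by the algebraic Hartogs lemma. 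Actually, since $X$ is only assumed integral and noetherian, not normal, the cleanest route is: pull back along the normalization $\nu:\widetilde{X}\to X$, extend there using normality and codimension $\geq 2$, and then descend; but I expect the author instead argues directly that $U$ itself is the complement of a \emph{closed subscheme of codimension $\geq 2$} and that $\mathcal{L}_1,\mathcal{L}_2$ extend the pairing, so no normality is needed.

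The key geometric input is understanding $X\setminus U$. Since $U=\varphi_1^{-1}(\mathbf{G}(\xi))$ and $\mathbf{G}(\xi)\subseteq Y_1$ is the open locus where the rational linear map $\mathbf{P}(\xi)$ is defined, $X\setminus U$ is (a closed subset contained in) the preimage of the \emph{center} of the projection $\xi$. In the classical picture this is $\varphi_1^{-1}(W\cap\varphi_1(X))$ where $W$ is the linear subspace; because $\varphi_2=\mathbf{P}(\xi)\circ\varphi_1$ is a \emph{morphism} on all of $X$ (it is induced by the honest pair $(\mathcal{L}_2,\psi_2)$, hence defined everywhere), the composite is defined everywhere, which forces the indeterminacy locus $X\setminus U$ to have codimension $\geq 2$ — a point or a divisor of indeterminacy would obstruct $\varphi_2$ from being a morphism when $X$ is integral. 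I would make this precise by noting that $\psi_2$ surjects onto $\mathcal{L}_2$ in degree $1$, so $\varphi_2$ has no base points; comparing with $\psi_1(f^*\xi)$, which defines $\mathbf{P}(\xi)\circ\varphi_1$ on $U$ and whose degree-$1$ part may fail to be surjective precisely along $X\setminus U$, one sees the failure locus is where the degree-$1$ part of $\psi_1(f^*\xi)$ vanishes; the existence of the everywhere-defined $\varphi_2$ forces that vanishing locus to be small, or more directly: $s$ is characterized by $s\circ\psi_2=\psi_1\circ(f^*\xi)$ in degree $1$, and since $\psi_2$ is surjective, $s$ is \emph{forced} as a map of sheaves on all of $X$ (it is the unique factorization of $\psi_1\circ(f^*\xi)$ through the surjection $\psi_2$), and it agrees with $\beta$ on $U$ by construction.

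In fact, rereading the setup, I think the argument is cleaner than Hartogs and that is the route I would take: the commutative square in Theorem~\ref{general theorem} is exactly the degree-$1$ part of the compatibility, and on $U$ we already have $\beta\circ\psi_2|_U = \psi_1\circ(f^*\xi)|_U$ in degree $1$. Since $(f^*\mathcal{A}_2)_1 \to \mathcal{L}_2$ (i.e.\ $\psi_2$ in degree $1$) is surjective \emph{on all of $X$} by property $(\star)$ and the functor-of-points description, the map $\psi_1\circ (f^*\xi):(f^*\mathcal{A}_2)_1\to\mathcal{L}_1$ factors \emph{uniquely} through $\psi_2$ as a sheaf map $s:\mathcal{L}_2\to\mathcal{L}_1$ \emph{provided} $\ker(\psi_2|_{\deg 1})\subseteq\ker(\psi_1\circ(f^*\xi)|_{\deg 1})$; this kernel inclusion holds on the dense open $U$, hence holds on all of $X$ because $(f^*\mathcal{A}_2)_1$ is coherent, $\mathcal{L}_1$ is torsion-free (as $X$ is integral), and a map of sheaves vanishing on a dense open vanishes identically on a torsion-free target. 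The resulting $s$ automatically restricts to $\beta$ on $U$, completing the extension. The main obstacle — and the step I would be most careful about — is justifying that the kernel containment propagates from $U$ to $X$: it rests on $X$ integral $\Rightarrow \mathcal{L}_1$ has no torsion supported on $X\setminus U$, so that the composite $\ker(\psi_2|_1)\hookrightarrow (f^*\mathcal{A}_2)_1 \xrightarrow{\psi_1\circ f^*\xi}\mathcal{L}_1$, being zero over the dense open $U$, is zero as a map into the torsion-free sheaf $\mathcal{L}_1$.
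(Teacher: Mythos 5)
Your final argument coincides with the paper's proof: the author likewise reduces the extension to the kernel containment $\ker(\psi_2)_1\subseteq\ker(\psi_1(f^*\xi))$, verifies it by restricting to $U$ where $\beta$ makes the square commute, and concludes by integrality of $X$ that a section of $\mathcal{L}_1$ vanishing on the dense open $U$ vanishes identically. The preliminary Hartogs/normalization discussion is unnecessary, but the route you ultimately settle on is exactly the paper's.
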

\begin{proof}
We only have to show that $\ker(\psi_2)_1\subseteq\ker(\psi_1(f^*\xi))$, where the subscript 1 indicates the graded 1 piece of $\psi_2$. Let $W\subseteq X$ be open, let $s\in\ker(\psi_2)_1$ and set $t:=\psi_1(f^*\xi)(s)$. Then 
\[t|_{U\cap W}=\beta\psi_2(s|_{U\cap W})=0.\]
This implies that $t|_U=0$, and since $X$ is integral, we obtain that $t=0$.
\end{proof}

Summing up, if $\xi:\mathcal{A}_2\to\mathcal{A}_1$ is a graded morphism of $\mathcal{O}_S$-algebras such that $\mathbf{P}(\xi)\varphi_1=\varphi_2$ on $\varphi_1^{-1}(\mathbf{G}(\xi))$, there exists $s\in\mathrm{Hom}_{\mathcal{O}_X}(\mathcal{L}_2,\mathcal{L}_1)$ such that
\begin{equation}\label{square}\xymatrix{(f^*\mathcal{A}_2)_1\ar[d]_{\psi_2}\ar[rr]^{f^*\xi}&&(f^*\mathcal{A}_1)_1\ar[d]^{\psi_1}\\\mathcal{L}_2\ar[rr]_{s}&&\mathcal{L}_1}\end{equation}
commutes. 

Reciprocally, let $s\in\mathrm{Hom}_{\mathcal{O}_X}(\mathcal{L}_2,\mathcal{L}_1)$ be such that the above diagram commutes.

\begin{lemma}\label{D(s)}
We have that $\mathbf{D}(s):=X\smallsetminus\mathbf{V}(s)=\varphi_1^{-1}(\mathbf{G}(\xi))$.
\end{lemma}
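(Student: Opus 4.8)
The plan is to prove the two subschemes of $X$ coincide by comparing them locally and using that everything reduces, by hypothesis $(\star)$, to degree-$1$ data. First I would recall what $\mathbf{G}(\xi)\subseteq Y_1$ is: by the construction in \cite[Section 3.5.1]{EGAII}, a point $y\in Y_1$ lies in $\mathbf{G}(\xi)$ precisely when the composite $\pi_1^*\mathcal{A}_2 \xrightarrow{\pi_1^*\xi} \pi_1^*\mathcal{A}_1 \xrightarrow{\alpha} \mathrm{Sym}(\mathcal{O}_{Y_1}(1))$ is still surjective in degree $1$ at $y$, i.e. $\mathbf{G}(\xi)$ is the largest open on which $\alpha\,\pi_1^*\xi$ defines a morphism to $Y_2$. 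Pulling this back along $\varphi_1$ and using $\varphi_1^*\alpha = \psi_1$ (established earlier in the section), the locus $\varphi_1^{-1}(\mathbf{G}(\xi))$ is exactly the open subset of $X$ on which the degree-$1$ map $\psi_1 \circ (f^*\xi)_1 : (f^*\mathcal{A}_2)_1 \to \mathcal{L}_1$ is surjective.

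**The local computation.** Now I would use the commuting square \eqref{square}: on $X$ we have $\psi_1\circ (f^*\xi)_1 = s\circ \psi_2$, where $\psi_2 : (f^*\mathcal{A}_2)_1 \to \mathcal{L}_2$ is already surjective (since $\varphi_2$ is a morphism to all of $Y_2$). Therefore the image of $\psi_1\circ(f^*\xi)_1$ equals the image of $s : \mathcal{L}_2 \to \mathcal{L}_1$. Working in a local trivialization where $\mathcal{L}_1\simeq\mathcal{L}_2\simeq\mathcal{O}_X$, the homomorphism $s$ is multiplication by a section of $\mathcal{O}_X$, and $s$ is surjective at a point $x$ if and only if that section is a unit at $x$, i.e. does not vanish at $x$ — which is by definition the condition $x\in\mathbf{D}(s) = X\smallsetminus\mathbf{V}(s)$. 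Hence $\psi_1\circ(f^*\xi)_1$ is surjective exactly on $\mathbf{D}(s)$, and combined with the previous paragraph this gives $\mathbf{D}(s) = \varphi_1^{-1}(\mathbf{G}(\xi))$.

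**Expected obstacle.** The routine part is the local unfolding just described; the step requiring the most care is making precise that $\mathbf{G}(\xi)$ really is characterized as the non-vanishing locus of the induced degree-$1$ map, rather than something a priori larger — i.e. pinning down the universal property from \cite{EGAII} and checking it is compatible with the functor-of-points description recalled earlier in this section (the pair $(\mathcal{L},\psi)$ with $f^*\mathcal{B}_1\to\mathcal{L}$ surjective). Once one grants that $\varphi_1^{-1}(\mathbf{G}(\xi))$ is the surjectivity locus of $\psi_1\circ(f^*\xi)_1$, the rest is immediate from $(\star)$ and the square \eqref{square}. I would also remark that $\mathbf{D}(s)$ is genuinely open since $\mathbf{V}(s)$ is the closed subscheme cut out by the image of $s^\vee\otimes\mathcal{L}_1 = \sheafhom(\mathcal{L}_2,\mathcal{O}_X)\otimes$-twisted ideal, so no separate openness argument is needed. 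On this open set, by construction $\mathbf{P}(\xi)\varphi_1$ is the morphism classified by $\bigl(\mathcal{L}_1|_{\mathbf{D}(s)},\, \psi_1(f^*\xi)\bigr) = \bigl(\mathcal{L}_1|_{\mathbf{D}(s)},\, s\psi_2\bigr)$, which is visibly $\sim$-equivalent to $\varphi_2|_{\mathbf{D}(s)}$ via $s$ (an isomorphism there), closing the reciprocal direction of Theorem \ref{general theorem}.
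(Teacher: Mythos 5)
Your proof is correct, but it reaches the conclusion by a genuinely different route than the paper, so it is worth comparing the two. Both arguments contain the observation that on $\mathbf{D}(s)$ the map $s$ is an isomorphism, whence $\psi_1(f^*\xi)=s\psi_2$ is surjective there and $\mathbf{P}(\xi)\varphi_1$ is defined and equal to $\varphi_2$; this gives $\mathbf{D}(s)\subseteq\varphi_1^{-1}(\mathbf{G}(\xi))$. For the reverse inclusion the paper does \emph{not} unwind the EGA definition of $\mathbf{G}(\xi)$: it first disposes of the case $\mathbf{D}(s)=\varnothing$, then uses integrality of $X$ and separatedness of $Y_2$ over $S$ to extend the equality $\mathbf{P}(\xi)\varphi_1=\varphi_2$ from the dense open $\mathbf{D}(s)$ to all of $\varphi_1^{-1}(\mathbf{G}(\xi))$, and finally invokes the already-established forward direction of Theorem \ref{general theorem} to produce a section $s'$ making \eqref{square} commute that is an isomorphism over $\varphi_1^{-1}(\mathbf{G}(\xi))$; since $s$ and $s'$ agree on $\mathbf{D}(s)$ and $X$ is integral, $s=s'$, so $s$ is nonvanishing on $\varphi_1^{-1}(\mathbf{G}(\xi))$. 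You instead prove the equality in one stroke by identifying both sides as surjectivity loci of degree-one maps onto line bundles: $\varphi_1^{-1}(\mathbf{G}(\xi))$ as the locus where $\psi_1\circ(f^*\xi)_1$ is surjective onto $\mathcal{L}_1$, and $\mathbf{D}(s)$ as the locus where $s$ is surjective, with the square \eqref{square} and the surjectivity of $\psi_2$ forcing the two images to coincide. Your route avoids the separatedness argument, the case split, and the appeal to the forward direction of the theorem; what it buys this with is the one fact you rightly flag as delicate, namely that under hypothesis $(\star)$ the open set $\mathbf{G}(\xi)$ is exactly the complement of $V_+$ of the ideal generated by $\xi((\mathcal{A}_2)_1)$, i.e.\ the locus where the degree-one composite $\alpha\circ\pi_1^*\xi$ is surjective onto $\mathcal{O}_{Y_1}(1)$, and that this locus pulls back along $\varphi_1$ to the surjectivity locus on $X$ because surjectivity onto an invertible sheaf is detected on fibres. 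Both facts are standard consequences of generation in degree one, so your argument is complete; it is arguably the cleaner of the two, while the paper's version leans only on general nonsense (density plus separatedness) and on work it has already done.
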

\begin{proof}
On $\mathbf{D}(s)$, $s$ gives an isomorphism between $\mathcal{L}_2|_{\mathbf{D}(s)}$ and $\mathcal{L}_1|_{\mathbf{D}(s)}$. Therefore, $\psi_2$ and $\psi_1(f^*\xi)$ induce the same morphisms from $\mathbf{D}(s)$ to $Y_2$, which are simply $\varphi_2$ and $\mathbf{P}(\xi)\varphi_1$, respectively. In particular, $\mathbf{P}(\xi)\varphi_1$ is defined on $\mathbf{D}(s)$, and so $\mathbf{D}(s)\subseteq\varphi_1^{-1}(\mathbf{G}(\xi))$.  

For the other inclusion, we have that $\mathbf{P}(\xi)\varphi_1$ and $\varphi_2$ are two morphisms on $\varphi_1^{-1}(\mathbf{G}(\xi))$ that coincide on the open set $\mathbf{D}(s)$. If $\mathbf{D}(s)=\varnothing$, then $s=0$ and the image of $f^*\xi$ is contained in $\ker\psi_1$. However this implies that $\varphi_1^{-1}(\mathbf{G}(\xi))=\varnothing$. If $\mathbf{D}(s)\neq\varnothing$, then since $Y_2$ is separated over $S$, these morphisms must be equal on the larger open set. By our previous analysis, there exists $s'\in\mathrm{Hom}_{\mathcal{O}_X}(\mathcal{L}_2,\mathcal{L}_1)$ that makes diagram \eqref{square} commute. However, $s$ and $s'$ coincide on $\mathbf{D}(s)$, and since $X$ is integral and noetherian, these sections must be equal. Moreover, also by our previous analysis, $s$ is an isomorphism over $\varphi_1^{-1}(\mathbf{G}(\xi))$, and so $\varphi_1^{-1}(\mathbf{G}(\xi))\subseteq\mathbf{D}(s)$.
\end{proof}

This lemma implies then that on $\varphi_1^{-1}(\mathbf{G}(\xi))$, $s$ is an isomorphism between $\mathcal{L}_2|_{\varphi_1^{-1}(\mathbf{G}(\xi))}$ and $\mathcal{L}_1|_{\varphi_1^{-1}(\mathbf{G}(\xi))}$, and therefore $\mathbf{P}(\xi)\varphi_1=\varphi_2$ on $\varphi_1^{-1}(\mathbf{G}(\xi))$. This therefore completes the proof of Theorem \ref{general theorem}. \qed\\

\section{Galois projections with fixed group}\label{galois}

In this section we will now study the situation given in the introduction. Let $X$ be a projective variety of dimension $n$ and let $\varphi:X\hookrightarrow\mathbb{P}^N$ be an embedding given by a linear system $V\leq H^0(X,\mathcal{L})$ for some invertible sheaf $\mathcal{L}$. Let $W\in\mathbf{G}^{\mathrm{aut}}_{X,\varphi}$, let $G:=\mathrm{Gal}(W)$ be its Galois group, and fix a quotient map $\pi_G:X\to\mathbb{P}^n$ by $G$.

\begin{lemma}\label{linear projection}
There is a 1-1 correspondence between elements of $\mathbf{G}_{X,\varphi}^{\mathrm{aut}}$ with Galois group equal to $G$ (not just isomorphic) and the set of all linear projections $\ell:\mathbb{P}^N\dashrightarrow\mathbb{P}^n$ such that the following diagram is commutative (where defined):
\[\xymatrix{X\ar[dr]_{\pi_G}\ar[r]^\varphi&\mathbb{P}^N\ar@{-->}[d]^\ell\\&\mathbb{P}^n}\]
\end{lemma}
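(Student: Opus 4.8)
The plan is to set up a natural map in each direction and check they are mutually inverse. Given $W\in\mathbf{G}_{X,\varphi}^{\mathrm{aut}}$ with $\mathrm{Gal}(W)=G$, the linear projection $\pi_W\colon\mathbb{P}^N\dashrightarrow\mathbb{P}^n$ composed with $\varphi$ realizes the quotient $X\to X/G$. Since we have fixed a particular quotient map $\pi_G\colon X\to\mathbb{P}^n$, the two maps $\pi_W\circ\varphi$ and $\pi_G$ differ only by an automorphism $\tau$ of $\mathbb{P}^n$; replacing $\pi_W$ by $\tau^{-1}\circ\pi_W$ (still a linear projection from $W$, since $\Aut(\mathbb{P}^n)=\mathrm{PGL}_{n+1}$ acts on the target and a linear projection is only well-defined modulo this action anyway) gives a linear projection $\ell$ with $\ell\circ\varphi=\pi_G$ where defined. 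Conversely, given a linear projection $\ell\colon\mathbb{P}^N\dashrightarrow\mathbb{P}^n$ making the triangle commute, its center is a linear subspace $W\in\mathbb{G}(N-n-1,N)$; since $\ell\circ\varphi=\pi_G$ is the quotient by $G\leq\Aut(X)$ with $X/G\simeq\mathbb{P}^n$, the generic fiber is the $G$-orbit, hence $\pi_W\circ\varphi$ is Galois with Galois group exactly $G$ (as a subgroup of $\Aut(X)$, not merely up to isomorphism), so $W\in\mathbf{G}_{X,\varphi}^{\mathrm{aut}}$.

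The key steps, in order, are: first, observe that both $\pi_W\circ\varphi$ (for a Galois $W$) and $\pi_G$ are finite morphisms $X\dashrightarrow\mathbb{P}^n$ exhibiting $\mathbb{P}^n$ as the quotient $X/G$, and recall that a morphism to a quotient variety by a given group is unique up to a unique automorphism of the target compatible with the group action — but since both quotients are by the same $G$, the comparison automorphism is unrestricted and lives in $\Aut(\mathbb{P}^n)$; second, note that precomposing a linear projection with $W$ as center by an automorphism of $\mathbb{P}^n$ yields another linear projection with the same center $W$, so the assignment $W\mapsto\ell$ is well-defined once we agree that $\ell$ is the unique lift of $\pi_G$ (the commutativity of the triangle pins $\ell$ down exactly, not just up to $\Aut(\mathbb{P}^n)$); third, for the reverse map, extract $W$ as the indeterminacy locus / center of $\ell$ and verify that the field extension $k(\mathbb{P}^n)\hookrightarrow k(X)$ it induces coincides with the one induced by $\pi_G$, which is Galois with group $G$; fourth, check the two constructions are inverse to each other, which is essentially the observation that $W$ is recovered from $\ell$ as its center and $\ell$ is recovered from $W$ by the uniqueness of the lift.

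I would phrase the equivalence between linear projections and their centers cleanly: a dominant rational map $\mathbb{P}^N\dashrightarrow\mathbb{P}^n$ is a linear projection precisely when it is given (in suitable coordinates) by $n+1$ linear forms, and the common zero locus of those forms is an $(N-n-1)$-plane $W$; conversely $W$ determines $\pi_W$ up to $\Aut(\mathbb{P}^n)$. The one subtlety worth spelling out is why the Galois group comes out \emph{equal} to $G$ and not just conjugate or isomorphic: since $\ell\circ\varphi=\pi_G$ as rational maps, the corresponding subfields of $k(X)$ are literally equal, namely $\pi_G^*k(\mathbb{P}^n)=\pi_W^*\varphi_*^{-1}k(\mathbb{P}^n)$, and the subgroup of $\Aut(X)$ fixing this subfield is $G$ by definition of $\pi_G$ being the quotient by $G$. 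This is where one uses that $G\leq\Aut(X)$ with $X/G\simeq\mathbb{P}^n$ rather than merely a group of birational automorphisms.

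The main obstacle, such as it is, is purely bookkeeping: being careful about the phrase ``well-defined modulo automorphisms of $\mathbb{P}^n$'' so that the correspondence is genuinely $1$-$1$ and not $1$-to-$\Aut(\mathbb{P}^n)$. The resolution is that in the statement the target $\mathbb{P}^n$ and the quotient map $\pi_G$ are \emph{fixed} in advance, so requiring the triangle to commute rigidifies $\ell$ completely; the only freedom on the $W$-side is the choice of $W$ itself, which is exactly what we want to parametrize. Once this is pinned down, both directions are immediate and the verification that they are inverse is formal.
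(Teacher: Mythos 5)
Your proposal is correct and takes essentially the same route as the paper: in one direction you extract the center $W$ of $\ell$ and note that $\ell\circ\varphi=\pi_G$ forces the induced subfield of $k(X)$ to be the $G$-invariants, so the Galois group is literally $G$; in the other you use uniqueness of the quotient map to find the unique automorphism of $\mathbb{P}^n$ rigidifying $\pi_W$ into the $\ell$ that makes the triangle commute. The extra care you take about ``equal versus isomorphic'' and about the $\Aut(\mathbb{P}^n)$-ambiguity being absorbed by fixing $\pi_G$ is exactly the content the paper's shorter proof leaves implicit.
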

\begin{proof}
Clearly if $\ell$ is a linear projection that makes the above diagram commute, then the base locus of $\ell$ is a Galois subspace for $\varphi$ with Galois group $G$. Reciprocally, if $W\in\mathbf{G}_{X,\varphi}^{\mathrm{aut}}$ has Galois group $G$, then $\pi_W\varphi$ can be extended to a $G$-invariant morphism $f:X\to\mathbb{P}^n$. By the uniqueness of the quotient map, there exists a unique automorphism $\sigma\in\mathrm{Aut}(\mathbb{P}^n)$ such that $\sigma f=\pi_G$. In particular, $\ell:=\sigma\pi_W$ is a linear projection that makes the above diagram commute.
\end{proof}

Since $\varphi$ is given by a linear system $V\leq H^0(X,\mathcal{L})$ and $\pi_G$ is given by the linear system $H^0(X,\pi_G^*\mathcal{O}(1))^G$ for the invertible sheaf $\pi_G^*\mathcal{O}(1)$, by Theorem \ref{general theorem}, we have that a linear projection $\ell:\mathbb{P}^N\dashrightarrow\mathbb{P}^n$ induced by a $k$-linear map $\xi:k[x_1,\ldots,x_n]_1\to k[y_1,\ldots,y_N]_1$ makes the diagram of the previous lemma commute if and only if there exists $s\in\mathrm{Hom}_{\mathcal{O}_X}(\mathcal{L}_2,\mathcal{L}_1)$ such that the diagram
\begin{equation}\label{diagram with s}\xymatrix{\mathcal{O}_X[x_0,\ldots,x_n]_1\ar[d]_{\pi_G^*}\ar[r]^\xi&\mathcal{O}_X[y_0,\ldots,y_N]_1\ar[d]^{\varphi^*}\\\pi_G^*\mathcal{O}(1)\ar[r]_s&\mathcal{L}}\end{equation}
commutes. However, the images of these vertical arrows are simply 
\[H^0(X,\pi_G^*\mathcal{O}(1))^G\otimes_k\mathcal{O}_X\hspace{0.5cm}\text{and}\hspace{0.5cm} V\otimes_k\mathcal{O}_X,\] 
respectively, and so what is needed is the existence of $s\in\mathrm{Hom}_{\mathcal{O}_X}(\pi_G^*\mathcal{O}(1)),\mathcal{L})$ such that 
\[s(H^0(X,\pi_G^*\mathcal{O}(1))^G\otimes_k\mathcal{O}_X)\subseteq V\otimes_k\mathcal{O}_X.\]
Now, using the well-known fact that $\mathrm{Hom}_{\mathcal{O}_X}(\mathcal{L}_1,\mathcal{L}_2)\simeq H^0(X,\mathcal{L}_2\otimes\mathcal{L}^{-1})$ for two line bundles $\mathcal{L}_1$ and $\mathcal{L}_2$, as well as the existence of a natural multiplication map
\[H^0(X,\mathcal{L}_1)\otimes H^0(X,\mathcal{L}_2)\to H^0(X,\mathcal{L}_1\otimes\mathcal{L}_2),\]
the previous condition is equivalent to the existence of $s\in H^0(X,\mathcal{L}\otimes\pi_G^*\mathcal{O}(-1))$ such that 
\begin{equation}\label{inclusion} s\otimes H^0(X,\pi_G^*\mathcal{O}(1))^G\subseteq V.\end{equation}
It is clear that such an $s$ that also makes diagram \ref{diagram with s} commute must be unique. 

Reciprocally, if $s\in H^0(X,\mathcal{L}\otimes\pi_G\mathcal{O}(-1))$ is such that (\ref{inclusion}) holds, then for each $0\leq j\leq n$, there exist $a_{ij}\in k$ such that
\[s\otimes\pi_G^*x_j=\sum_{i=0}^Na_{ij}\varphi^*y_i.\]
Therefore the matrix $(a_{ij})_{i,j}$ gives a $k$-linear map 
\[\xi:k[x_0,\ldots,x_n]_1\to k[y_0,\ldots,y_N]_1\]
which in turn gives us a linear map $\ell:=\mathbf{P}(\xi):\mathbb{P}^N\dashrightarrow\mathbb{P}^n$ such that $\ell\varphi=\pi_G$, when defined. By using Lemma \ref{linear projection}, we have therefore proven the following proposition:

\begin{proposition}\label{Galoisgroup}
There is a natural bijective correspondence between the set of Galois subspaces $W\in\mathbf{G}_{X,\varphi}^\mathrm{aut}$ with Galois group $G$ and (the closed points of)
\[\mathbb{P}_{G,\varphi}:=\mathbb{P}\{s\in H^0(X,\mathcal{L}\otimes\pi_G^*\mathcal{O}(-1)):s\otimes H^0(X,\pi_G^*\mathcal{O}(1))^G\subseteq V\}.\]
In particular, if $\varphi$ is given by a complete linear system, then there is a bijective correspondence between the set of Galois subspaces in $\mathbf{G}_{X,\varphi}^\mathrm{aut}$ with Galois group $G$ and the space $\mathbb{P}H^0(X,\mathcal{L}\otimes\pi_G^*\mathcal{O}(-1))$.
\end{proposition}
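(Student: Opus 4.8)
The proof is essentially a matter of collecting the ingredients assembled in the discussion above, so the plan is to organize those into a clean argument. First I would invoke Lemma \ref{linear projection} to reduce the problem: it identifies the set of $W\in\mathbf{G}_{X,\varphi}^{\mathrm{aut}}$ with Galois group \emph{equal} to $G$ with the set of linear projections $\ell:\mathbb{P}^N\dashrightarrow\mathbb{P}^n$ satisfying $\ell\circ\varphi=\pi_G$ wherever defined, the identification sending such an $\ell$ to its center $W$. (Here it matters that $\pi_G$ is the fixed chosen quotient map, so that $\ell$ is pinned down as an honest morphism, not merely up to $\mathrm{Aut}(\mathbb{P}^n)$.) It therefore suffices to parametrize these $\ell$.

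Next I would run Theorem \ref{general theorem} over $S=\mathrm{Spec}\,k$ with $\mathcal{A}_1=k[y_0,\dots,y_N]$, $\mathcal{A}_2=k[x_0,\dots,x_n]$, $\varphi_1=\varphi$ carrying the pair $(\mathcal{L},\varphi^*)$ and $\varphi_2=\pi_G$ carrying $(\pi_G^*\mathcal{O}(1),\pi_G^*)$. It gives: a linear map $\xi:k[x]_1\to k[y]_1$ yields $\ell=\mathbf{P}(\xi)$ with $\ell\varphi=\pi_G$ if and only if there is a (unique) $s\in\mathrm{Hom}_{\mathcal{O}_X}(\pi_G^*\mathcal{O}(1),\mathcal{L})$ making the square \eqref{diagram with s} commute. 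Since the images of the vertical maps there are $H^0(X,\pi_G^*\mathcal{O}(1))^G\otimes_k\mathcal{O}_X$ and $V\otimes_k\mathcal{O}_X$, commutativity amounts to $s$ carrying the first into the second; and via the identification $\mathrm{Hom}_{\mathcal{O}_X}(\pi_G^*\mathcal{O}(1),\mathcal{L})\simeq H^0(X,\mathcal{L}\otimes\pi_G^*\mathcal{O}(-1))$ together with the multiplication-of-sections map, this is exactly condition \eqref{inclusion}: $s\in H^0(X,\mathcal{L}\otimes\pi_G^*\mathcal{O}(-1))$ with $s\otimes H^0(X,\pi_G^*\mathcal{O}(1))^G\subseteq V$, i.e.\ $s$ is a point of the linear subspace defining $\mathbb{P}_{G,\varphi}$.

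It then remains to projectivize. Given a nonzero such $s$, writing $s\otimes\pi_G^*x_j=\sum_i a_{ij}\varphi^*y_i$ produces $\xi=(a_{ij})$ and hence $\ell=\mathbf{P}(\xi)$; scaling $s$ by $\lambda\in k^*$ scales $\xi$, which changes neither $\ell$ nor its center $W$, while conversely uniqueness of $s$ for a fixed $\xi$ plus the fact that $\ell$ determines $\xi$ up to a scalar shows the fibers of $s\mapsto W$ are precisely the $k^*$-orbits. Combined with Lemma \ref{linear projection} this produces the bijection $W\leftrightarrow[s]$ onto the closed points of $\mathbb{P}_{G,\varphi}$. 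One verification I would be careful to spell out is that $s\neq0$ does produce a genuine element of $\mathbb{G}(N-n-1,N)$ with Galois group $G$: since $X$ is integral, multiplication by the nonzero section $s$ is injective on global sections, so $\xi$ has rank $n+1$ and the center of $\ell$ has the expected dimension $N-n-1$; and because $\ell\varphi=\pi_G$ is the quotient by $G\leq\mathrm{Aut}(X)$ with $X/G\simeq\mathbb{P}^n$, this center is indeed a Galois subspace for $\varphi$ with Galois group $G$. The ``in particular'' clause is then immediate: when $V=H^0(X,\mathcal{L})$ the multiplication map lands in $V$ automatically, so the linear condition \eqref{inclusion} is vacuous and $\mathbb{P}_{G,\varphi}=\mathbb{P}H^0(X,\mathcal{L}\otimes\pi_G^*\mathcal{O}(-1))$.

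The main obstacle is not a single hard step --- the substantive content is Theorem \ref{general theorem}, already established --- but rather the bookkeeping that makes the correspondence an honest bijection: surjectivity onto all $W$ with Galois group exactly $G$ (supplied by Lemma \ref{linear projection}), injectivity after passing to projective space, and the fact that the dimension and finiteness conditions built into the definition of $\mathbf{G}_{X,\varphi}^{\mathrm{aut}}$ are automatic as soon as $s\neq0$.
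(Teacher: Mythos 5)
Your proposal is correct and follows essentially the same route as the paper: reduce via Lemma \ref{linear projection} to linear projections $\ell$ with $\ell\varphi=\pi_G$, apply Theorem \ref{general theorem} to translate this into the existence of a section $s$ satisfying condition \eqref{inclusion}, and then projectivize using the fact that $\xi$ is determined by $\ell$ only up to a nonzero scalar. Your added check that a nonzero $s$ yields a center of the correct dimension $N-n-1$ is a detail the paper leaves implicit, but it does not change the argument.
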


We note that the projectivization of the global sections in the above proposition is simply added since two $k$-linear maps 
\[\xi_1,\xi_2:k[x_0,\ldots,x_n]_1\to k[y_0,\ldots,y_n]_1\] 
give the same rational linear map between the associated projective spaces if and only if they differ by multiplication by a non-zero constant.

\begin{rem}\label{explicit} We note that given an element $[s]\in\mathbb{P}_{G,\varphi}$, multiplication by $s$ gives a $k$-linear map $L_s:H^0(X,\pi_G^*\mathcal{O}(1))^G\to V$ and the Galois subspace associated to $[s]$ is exactly 
\[\ker L_s^\vee=\{\ell\in V^\vee:s\otimes H^0(X,\pi_G^*\mathcal{O}(1))^G\subseteq\ker\ell\}.\]
\end{rem}

\begin{lemma}\label{embedding}
The natural map $\mathbb{P}_{G,\varphi}\to\mathbb{G}(N-n-1,N)$ is an embedding.
\end{lemma}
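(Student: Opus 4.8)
The plan is to realize the natural map as a proper morphism $\Phi\colon\mathbb{P}_{G,\varphi}\to\mathbb{G}(N-n-1,N)$ that is injective on closed points and has injective differential at every closed point; for a morphism of $k$-varieties ($k$ algebraically closed of characteristic zero) this standard criterion forces $\Phi$ to be a closed immersion. Write $U:=H^0(X,\pi_G^*\mathcal{O}(1))^G$ and $M:=\{s\in H^0(X,\mathcal{L}\otimes\pi_G^*\mathcal{O}(-1)):s\otimes U\subseteq V\}$, so that $\mathbb{P}_{G,\varphi}=\mathbb{P}(M)$ is a linear subspace of $\mathbb{P}(H^0(X,\mathcal{L}\otimes\pi_G^*\mathcal{O}(-1)))$, in particular a projective space. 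For $s\in M$ let $L_s\colon U\to V$ be multiplication by $s$, which is injective when $s\neq 0$ since $X$ is integral and $\dim U=n+1\leq N+1=\dim V$. By Remark \ref{explicit} the map in question sends $[s]$ to $\ker L_s^\vee=(\operatorname{im}L_s)^\perp\subseteq V^\vee$, and it is a morphism: twisting the tautological inclusion $\mathcal{O}_{\mathbb{P}(M)}(-1)\hookrightarrow M\otimes\mathcal{O}_{\mathbb{P}(M)}$ by $U$ and composing with multiplication $M\otimes U\to V$ (which lands in $V$ by the definition of $M$) gives a map $U\otimes\mathcal{O}_{\mathbb{P}(M)}(-1)\to V\otimes\mathcal{O}_{\mathbb{P}(M)}$ that is fibrewise injective of constant rank $n+1$, hence a locally split injection; its cokernel is locally free of rank $N-n$ and classifies $\Phi$. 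Identifying $\mathbb{G}(N-n-1,N)$ with $\mathbb{G}(n+1,V)$ by $\mathbb{P}(\Lambda)\mapsto\Lambda^\perp$, the morphism $\Phi$ becomes $[s]\mapsto\operatorname{im}L_s=s\otimes U$.

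The heart of the argument is the rigidity statement: if $s,s'\in M\smallsetminus\{0\}$ and $s\otimes U=s'\otimes U$ inside $V$, then $[s]=[s']$. To prove it, note that $\pi_G\colon X\to X/G\simeq\mathbb{P}^n$ is finite and surjective, so the projection formula for the finite quotient gives $U=\pi_G^*H^0(\mathbb{P}^n,\mathcal{O}(1))$, and $\pi_G$ being dominant, $\pi_G^*$ is injective on $H^0(\mathbb{P}^n,\mathcal{O}(d))$ for every $d$. Fix a basis $t_0,\dots,t_n$ of $U$ and let $T_0,\dots,T_n$ be the corresponding coordinates on $\mathbb{P}^n$. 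From $s\otimes U=s'\otimes U$ we get $s\otimes t_i=s'\otimes\tilde t_i$ for some second basis $\tilde t_i=\sum_j a_{ij}t_j$ of $U$; in $k(X)$ this reads $\tilde t_i/t_i=s/s'$ for all $i$, so $\tilde t_i t_j=\tilde t_j t_i$ in $H^0(X,\pi_G^*\mathcal{O}(2))$. Applying $(\pi_G^*)^{-1}$ yields $\tilde T_i T_j=\tilde T_j T_i$ in $k[T_0,\dots,T_n]$ for all $i,j$, and unique factorization forces $\tilde T_i=\lambda T_i$ with $\lambda$ independent of $i$; hence $s\otimes t_i=\lambda(s'\otimes t_i)$ for all $i$, and integrality of $X$ gives $s=\lambda s'$.

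Granting this, injectivity of $\Phi$ on closed points is immediate, since $\Phi([s])=\Phi([s'])$ means $(\operatorname{im}L_s)^\perp=(\operatorname{im}L_{s'})^\perp$, i.e. $s\otimes U=s'\otimes U$. For the differential, identify $T_{[s]}\mathbb{P}(M)=M/ks$; computing the first-order deformation $(s+\varepsilon v)\otimes U$ of $s\otimes U$ over $k[\varepsilon]/(\varepsilon^2)$ shows that $d\Phi_{[s]}$ sends the class of $v\in M$ to the homomorphism $s\otimes u\mapsto\overline{v\otimes u}$ in $\operatorname{Hom}(s\otimes U,\,V/(s\otimes U))=T_{\Phi([s])}\mathbb{G}(n+1,V)$. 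This homomorphism vanishes exactly when $v\otimes U\subseteq s\otimes U$, and since both spaces have dimension $n+1$ when $v\neq 0$, this occurs precisely when $v=0$ or $v\otimes U=s\otimes U$, i.e. (by the rigidity statement) precisely when $v\in ks$. Hence $d\Phi_{[s]}$ is injective at every closed point.

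Since $\Phi$ is a proper morphism of $k$-varieties that is injective on closed points and has injective differential everywhere (in particular $\Omega_\Phi$ vanishes at every closed point of the Jacobson scheme $\mathbb{P}(M)$, so $\Phi$ is unramified), the criterion mentioned at the outset gives that $\Phi$ is a closed immersion, which is what we wanted. The step I expect to be the real obstacle is the rigidity statement: its proof uses crucially that $U$ is not an arbitrary base-point-free linear series but precisely the complete linear series defining the quotient map $\pi_G$, which is what allows the quadratic identities $\tilde t_i t_j=\tilde t_j t_i$ to be pushed down to $\mathbb{P}^n$ and then killed by unique factorization; the remaining steps are essentially formal.
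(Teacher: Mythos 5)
Your proof is correct and takes essentially the same route as the paper's (admittedly sketchy) argument: both reduce injectivity on closed points and on tangent spaces to the rigidity statement that $s\otimes U=s'\otimes U$ forces $[s]=[s']$, and conclude via properness of the source. The only substantive difference is that the paper asserts this rigidity fact without proof and computes the differential in affine coordinates on the Grassmannian, whereas you compute the differential intrinsically via $T_{[\Lambda]}\mathbb{G}=\mathrm{Hom}(\Lambda,V/\Lambda)$ and supply a correct proof of the rigidity by descending the identities $\tilde t_i t_j=\tilde t_j t_i$ to $\mathbb{P}^n$ and invoking unique factorization, so your write-up is, if anything, more complete.
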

\begin{proof}
This is a simple exercise of writing everything out, but we will give a sketch of the proof anyways. Write 
\[W:=\{s\in H^0(X,\mathcal{L}\otimes\pi_G^*\mathcal{O}(-1)):s\otimes H^0(X,\pi_G^*\mathcal{O}(1))^G\subseteq V\},\] 
and consider the linear transformation
\[\chi:W\to\mathrm{Hom(H^0(X,\pi_G^*\mathcal{O}(1))^G,V)}\]
\[s\mapsto\left(\chi_s:t\mapsto s\otimes t\right).\]
We need to analyze the differential of the map $W\to G(n+1,V)$ at a point $p\neq0$, where $s\mapsto \mathrm{Im}(\chi_s)$. By choosing bases for all the vector spaces involved, we can write $\chi_s=(\ell_{ij}(s))_{ij}$ where $\chi_s$ is an $(N+1)\times(n+1)$ matrix and each $\ell_{ij}$ is linear. Since $\chi_s$ is a monomorphism, we can assume, without loss of generality, that
\[\chi_s=\left(\begin{array}{c}N(s)\\M(s)\end{array}\right)\]
where $N(s)$ is invertible in a neighborhood of $p$. Now in affine coordinates, in a neighborhood of $p$ we have that $\chi$ can be identified with the map $s\mapsto M(s)N(s)^{-1}$. The differential of $\chi$ at $p$ is therefore $x\mapsto M(x)-M(p)N(p)^{-1}N(x)$. Since $\mathrm{Im}(\chi_s)=\mathrm{Im}(\chi_t)$ if and only if $s$ is a multiple of $t$, we obtain that $\ker(M(x)-M(p)N(p)^{-1}N(x))=\langle p\rangle$, and therefore the map
\[\mathbb{P}W\to G(n+1,V)\simeq\mathbb{G}(N-n-1,N)\]
is an embedding.
\end{proof}

\section{Families of Galois subspaces}\label{families} 

We now wish to relativize this construction in order to obtain families of Galois subspaces for $\varphi$. Following \cite[Chapter 4]{Kleiman2} (although with different notation), let $\mathbf{Div}_{\geq0}(X)$ denote the open subscheme of $\mathbf{Hilb}_{X/S}$ that parametrizes effective Cartier divisors on $X$, and let
\[\alpha:\mathbf{Div}_{\geq0}(X)\to\mathbf{Pic}(X)\]
\[D\mapsto\mathcal{O}_X(D)\]
denote the Abel map. We also have the morphism
\[\beta_G:\mathrm{Aut}(X)\to\mathbf{Pic}(X)\]
\[\theta\mapsto \mathcal{L}\otimes\theta^*\pi_G^*\mathcal{O}(-1).\]

Let $G\leq\mathrm{Aut}(X)$ be the Galois group of a Galois subspace $W\in\mathbf{G}_{X,\varphi}^\mathrm{aut}$, and consider the fiber product $\mathrm{Aut}^0(X)\times_{\mathbf{Pic}(X)}\mathbf{Div}_{\geq0}(X)$.
Here the map $\mathrm{Aut}^0(X)\to\mathbf{Pic}(X)$ is just the restriction of $\beta_G$. By \cite[Prop. 8.2.7]{Neron}, there exists a coherent sheaf $\mathcal{F}_G$ on $\mathrm{Aut}^0(X)$ such that
\[\mathrm{Aut}^0(X)\times_{\mathbf{Pic}(X)}\mathbf{Div}_{\geq0}(X)=\mathbf{P}(\mathcal{F}_G),\]
where $\mathbf{P}(\mathcal{F}_G):=\mathrm{Proj}(\mathrm{Sym}(\mathcal{F}_G))$.

We have the first projection $p_1:\mathbf{P}(\mathcal{F}_G)\to\mathrm{Aut}^0(X)$, and for any $\theta\in\mathrm{Aut}^0(X)$, the fiber is set-theoretically equal to
\begin{eqnarray}\nonumber p_1^{-1}(\theta)&=&\{D\in\mathbf{Div}_{\geq0}(X):\mathcal{O}_X(D)\simeq\mathcal{L}\otimes\theta^*\pi_G^*\mathcal{O}(-1)\}\\
\nonumber&=&|\mathcal{L}\otimes\theta^*\pi_G^*\mathcal{O}(-1)|\\
\nonumber&\simeq&\mathbb{P}H^0(X,\mathcal{L}\otimes\pi_{\theta G\theta^{-1}}^*\mathcal{O}(-1)).\end{eqnarray}
since $\pi_G\theta$ is just the quotient map of the group $\theta G\theta^{-1}$. This means that $\mathbf{P}(\mathcal{F}_G)$ is a scheme that parametrizes, over the base $\mathrm{Aut}^0(X)$, Galois subspaces for $\varphi$ whose Galois group is conjugate to $G$ by an element of $\mathrm{Aut}^0(X)$.

Now clearly there are fibers of $p_1$ that can coincide. To fix this, we see that the normalizer $N(G)$ of $G$ in $\mathrm{Aut}^0(G)$ acts by multiplication on the right on the first coordinate of $\mathbf{P}(\mathcal{F}_G)$ and the first projection is equivariant with respect to this action. Therefore we obtain a fibration
\[\mathbf{P}(\mathcal{F}_G)/N(G)\to \mathrm{Aut}^0(X)/N(G)\]
where each fiber corresponds to Galois subspaces for a unique conjugate of $G$. In particular, the natural map 
\[\mathbf{P}(\mathcal{F}_G)/N(G)\to\mathbf{G}(N-n-1,N)\]
is injective.

Since $N(G)$ acts freely on $\mathbf{P}(\mathcal{F}_G)$ and $\mathrm{Aut}^0(X)$ (and doesn't affect the fibers of the relative proj), we have that there exists a coherent sheaf $\mathcal{E}_G$ on $\mathrm{Aut}^0(X)/N(G)$ such that $\mathbf{P}(\mathcal{F}_G)/N(G)=\mathbf{P}(\mathcal{E}_G)$. We notice that the isomorphism class of $\mathbf{P}(\mathcal{E}_G))$ only depends on the orbit of $G$ under conjugation by $\mathrm{Aut}^0(X)$. 

\begin{rem}\label{not irreducible}
We note that $\mathbf{P}(\mathcal{E}_G)$ is not necessarily irreducible. Indeed, consider the case where $X$ is a general elliptic curve and $\varphi$ is the embedding associated to the complete linear system of an ample divisor $D$ of degree $N+1$. By \cite[Theorem 1.2]{AuffRah}, if $N\geq3$ is odd, then there exist disjoint Galois subspaces, and there are finitely many of them. On the other hand, if $G$ is a Galois group for a disjoint Galois subspace, then $N(G)$ is finite, and $\mathrm{Aut}^0(X)/N(G)=X/N(G)$ is infinite. The point is that there are only finitely many conjugates $\theta G\theta^{-1}$ of $G$ such that $\dim H^0(X,\mathcal{O}_X(D)\otimes\pi_{\theta G\theta^{-1}}^*\mathcal{O}(-1))>0$.
\end{rem}

\begin{rem}\label{stab}
Note that if $D\in\mathbf{Div}_{\geq0}(X)$, then if $\theta_0\in p_2^{-1}(D)$, we get that
\[p_2^{-1}(D)=\theta_0\mathrm{Stab}_{\mathrm{Aut}^0(X)}(\pi_G^*\mathcal{O}(1)).\]
In the same vein, we observe that if $G$ is such that $\pi_G^*\mathcal{O}(1)\simeq\mathcal{L}$, then
\[\mathbf{P}(\mathcal{E}_G)\simeq\mathrm{Stab}_{\mathrm{Aut}^0(X)}(\pi_G^*\mathcal{O}(1))/N(G).\]
\end{rem}

Now in order to finish the proof of Theorem \ref{main}, we only need the following:

\begin{proposition}
The natural injective map $\mathbf{P}(\mathcal{E}_G)\to\mathbf{G}(N-n-1,N)$ is an embedding.
\end{proposition}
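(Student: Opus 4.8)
First I would promote the point-wise description to a genuine morphism of schemes. Write $B:=\mathrm{Aut}^0(X)/N(G)$, let $\mathbb{G}:=\mathbf{G}(N-n-1,N)$, and let $p:\mathbf{P}(\mathcal{E}_G)\to B$ be the structure morphism. Over $\mathbf{P}(\mathcal{F}_G)$ the universal effective Cartier divisor, together with the Abel map, produces a canonical global section which fibrewise is the section $s$ of Remark~\ref{explicit}; multiplication by it is a morphism of $\mathcal{O}_{\mathbf{P}(\mathcal{F}_G)}$-modules $\mathcal{H}\to V\otimes_k\mathcal{O}_{\mathbf{P}(\mathcal{F}_G)}$, where $\mathcal{H}$ is the rank $n+1$ bundle with fibre $H^0(X,\pi_{\theta G\theta^{-1}}^*\mathcal{O}(1))^{\theta G\theta^{-1}}$ over $\theta$. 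Since $X$ is integral and this section is fibrewise nonzero, the morphism is fibrewise injective, hence has locally free cokernel, so by the universal property of the Grassmannian it defines a morphism $\mathbf{P}(\mathcal{F}_G)\to\mathbb{G}$. This is $N(G)$-equivariant (with $N(G)$ acting trivially on $\mathbb{G}$), so it descends to a morphism $\Phi_G:\mathbf{P}(\mathcal{E}_G)\to\mathbb{G}$ which on closed points is the bijection of Proposition~\ref{Galoisgroup}; in particular $\Phi_G$ is injective.

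Next I would consider the combined morphism $\iota:=(p,\Phi_G):\mathbf{P}(\mathcal{E}_G)\to B\times_k\mathbb{G}$ and show it is a closed immersion. It is a morphism over $B$; its source is proper over $B$ (being a relative $\mathrm{Proj}$ of a coherent sheaf) and $B\times_k\mathbb{G}\to B$ is proper, so $\iota$ is proper. For a point $q=(\bar{\theta},[W])$ of $B\times_k\mathbb{G}$ the fibre $\iota^{-1}(q)$ lies inside $p^{-1}(\bar{\theta})=\mathbb{P}_{\theta G\theta^{-1},\varphi}$, on which $\Phi_G$ restricts to the closed immersion of Lemma~\ref{embedding}; hence $\iota^{-1}(q)$ is either empty or $\mathrm{Spec}\,k(q)$. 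So $\iota$ is proper, unramified and universally injective, hence a proper monomorphism, hence a closed immersion; let $Z\subseteq B\times_k\mathbb{G}$ be its image, so that $\mathbf{P}(\mathcal{E}_G)\cong Z$ via $\iota$.

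Finally I would deduce that $\Phi_G$ is an immersion by showing the projection $Z\to\mathbb{G}$ is one. The decisive point is that the $B$-coordinate of a point of $Z$ is determined by its $\mathbb{G}$-coordinate: a Galois subspace $W$ recovers its Galois group, which --- being $\mathrm{Aut}^0(X)$-conjugate to $G$ --- singles out a point of $B$. Concretely, the plan is to produce a locally closed subscheme $V\subseteq\mathbb{G}$ containing $\Phi_G(\mathbf{P}(\mathcal{E}_G))$ and a ``take the Galois group'' morphism $g:V\to B$ with $g\circ\Phi_G=p$. Granting this, $\Phi_G:\mathbf{P}(\mathcal{E}_G)\to V$ factors the proper morphism $p$ through the separated morphism $g$ and is therefore proper; being also unramified and universally injective (the fibre computation of the previous step shows the fibres of $\Phi_G$ over $\mathbb{G}$ are reduced points or empty), it is a closed immersion onto a closed subscheme of $V$. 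That subscheme is locally closed in $\mathbb{G}$ and isomorphic to $\mathbf{P}(\mathcal{E}_G)$ via $\Phi_G$, so $\Phi_G$ is an embedding.

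The hard part will be exactly the construction of $(V,g)$ above --- equivalently, showing that the image of $\Phi_G$ is locally closed in $\mathbb{G}$ and that $\Phi_G$ is an isomorphism onto it. This is where the possible non-properness of $\mathrm{Aut}^0(X)$, and hence of $\mathbf{P}(\mathcal{E}_G)$, has to be confronted: the image of $\Phi_G$ need not be closed, so one cannot simply invoke ``proper $+$ injective $+$ unramified'', and one genuinely has to see that the Galois subspaces with Galois group $\mathrm{Aut}^0(X)$-conjugate to $G$ form a locally closed family over which the Galois group varies algebraically. I would attempt this either by descent --- recovering $V$ and $g$ from the proper morphism $\mathbf{P}(\mathcal{F}_G)\to\mathrm{Aut}^0(X)$ and the $N(G)$-action --- or locally on $\mathbb{G}$: over a standard affine chart of the Grassmannian the matrix entries of a plane determine, just as in the proof of Lemma~\ref{embedding}, the section $s$ up to scalar, hence the divisor and the point of $B$, giving the inverse morphism chart by chart.
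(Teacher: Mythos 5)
Your construction of the morphism is essentially the paper's: the paper also produces $\mathbf{P}(\mathcal{F}_G)\to\mathbb{G}(N-n-1,N)$ from the universal divisor (written there as $(\theta,D)\mapsto D+\theta^*\pi_G^*|\mathcal{O}(1)|$ followed by the duality $G(n+1,V)\simeq G(N-n,V^\vee)$) and descends through the $N(G)$-action, so your first paragraph, phrased via a fibrewise-injective map of bundles and the functor of points of the Grassmannian, is a correct repackaging. Your second step --- that the graph $(p,\Phi_G):\mathbf{P}(\mathcal{E}_G)\to B\times_k\mathbb{G}$ is a proper monomorphism, hence a closed immersion, because the fibres over points of $B\times_k\mathbb{G}$ reduce to the fibrewise closed immersions of Lemma \ref{embedding} --- is also correct, and is in fact a cleaner formulation than anything written in the paper.

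The gap is exactly where you locate it, and your proposal does not close it. Passing from ``closed in $B\times_k\mathbb{G}$'' to ``locally closed in $\mathbb{G}$'' requires the pair $(V,g)$, and neither of your two suggested constructions is routine: a point of $\mathbb{G}$ near the image carries no Galois group, so ``take the Galois group'' has no natural extension to an open or locally closed neighborhood; the only real candidate for $V$ is the image itself, and proving that the image is locally closed with $\Phi_G$ an isomorphism onto it \emph{is} the proposition. You have therefore reduced the statement to an equivalent unproved statement. The paper takes a different route at this point: it asserts that the embedding property follows from a differential computation generalizing the explicit matrix calculation in the proof of Lemma \ref{embedding}, now allowing the automorphism $\theta$ to vary, and leaves the details to the reader. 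That route localizes the remaining work to a concrete rank computation, although --- as your own discussion of non-properness correctly observes --- injectivity plus unramifiedness does not by itself yield an immersion for a non-proper source, so even the paper's sketch ultimately needs either local sections of $\Phi_G$ over its image (constructed chart by chart from the normal form $\chi_s=\left(\begin{smallmatrix}N(s)\\M(s)\end{smallmatrix}\right)$ of Lemma \ref{embedding}, now with $\theta$ as an additional coordinate) or an argument of the kind you outline. If you want to complete your proof, that chart-by-chart construction of local inverses on the image is the step you must actually carry out.
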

\begin{proof}
First, we see that it is a morphism. Indeed, identifying $\mathbb{P}V$ as a subspace of $|\mathcal{L}|$, first take the morphism
\[\mathbf{P}(\mathcal{F}_G)\to \mathbb{G}(n,\mathbb{P}V)\]
\[(\theta,D)\mapsto D+\theta^*\pi_G^*|\mathcal{O}(1)|\]
and then compose it with the isomorphism
\[\mathbb{G}(n,\mathbb{P}V)\simeq G(n+1,V)\to G(N-n,V^\vee)\simeq \mathbb{G}(N-n-1,\mathbb{P}V^\vee)\]
\[W\mapsto\{\ell\in V^\vee:W\subseteq\ker\ell\}.\]
By Remark \ref{explicit}, after correct identification, this composition is just the map $\mathbf{P}(\mathcal{F}_G)\to\mathbb{G}(N-n-1,N)$. Since it is $N(G)$-invariant, we have that the map from $\mathbf{P}(\mathcal{E}_G)\to\mathbb{G}(N-n-1,N)$ is a morphism. 

To see that it is an embedding, it is necessary to do a similar calculation as in the proof of Lemma \ref{embedding}, but this time taking into account the variation of the automorphism. We leave the details to the reader.\end{proof}

\begin{rem}\label{general}
Note that nowhere did we use that $\varphi$ is an embedding in order to describe $\mathbf{G}_{X,\varphi}^{\mathrm{aut}}$. Indeed, in every proof we only used the existence of the linear system $V$, not the fact that it separates points and tangent vectors. Therefore, one could also define $\mathbf{G}_{X,\varphi}^{\mathrm{aut}}$ in exactly the same way for \textit{any} morphism $\varphi:X\to\mathbb{P}^N$.
\end{rem}

\section{Intermediate projections}\label{intermediate}

Consider the same situation as before. Note that if $G\leq\mathrm{Aut}(X)$ is a finite subgroup such that $X/G\simeq\mathbb{P}^n$, then up until now we have been using principally two linear systems, namely $H^0(X,\pi_G^*\mathcal{O}(1))^G$ and a sublinear system $V\leq H^0(X,\mathcal{L})$. These give the maps $\varphi$ and $\pi_G$, respectively. However, there is another linear system that also plays an interesting role: $Q_G:=H^0(X,\pi_G^*\mathcal{O}(1))$. This linear system, along with any Galois subspace $W\in\mathbb{G}(N-n-1,N)$ with Galois group $G$, gives us a commutative diagram
\[\xymatrix{&&\mathbb{P}^N\ar@{-->}@/^2pc/[dd]^{\pi_W}\\X\ar[drr]_{\pi_G}\ar[urr]^{\varphi}\ar[rr]^{\psi_G}&&\mathbb{P}Q_G^\vee\ar@{-->}[d]\\&&\mathbb{P}^n}\]
where $\psi_G$ is the map defined by $Q_G$ and the vertical rational map is given by the inclusion $Q_G^G\hookrightarrow Q_G$. Now $\mathrm{Stab}_{\mathrm{Aut}^0(X)}(\pi_G^*\mathcal{O}(1))$ acts linearly on $Q_G$, and therefore on $\mathbb{P}Q_G^\vee$. By Remark \ref{stab} (and noting by Remark \ref{general} that the following set is well-defined), we get that $\mathbf{G}_{X,\psi_G}\simeq \mathrm{Stab}_{\mathrm{Aut}^0(X)}(\pi_G^*\mathcal{O}(1))/N(G)$, and \textit{every} Galois subspace here is disjoint from $\psi_G(X)$. Note that for every $\theta\in \mathrm{Stab}_{\mathrm{Aut}^0(X)}(\pi_G^*\mathcal{O}(1))$, $\pi_G^*\mathcal{O}(1)\otimes\pi_{\theta G\theta^{-1}}^*\mathcal{O}(-1)\simeq\mathcal{O}_X$, and so $\mathbb{P}_{\theta G\theta^{-1},\psi_G}$ is a point. In particular, we get the following result:

\begin{proposition}
For every $G\leq\mathrm{Aut}(X)$ such that $X/G\simeq\mathbb{P}^n$, we get a surjective morphism $\mathbf{P}(\mathcal{E}_G)\to \mathrm{Stab}_{\mathrm{Aut}^0(X)}(\pi_G^*\mathcal{O}(1))/N(G)\hookrightarrow\mathbf{G}_{X,\psi_G}^\varnothing$ whose fiber over the class of $\theta$ is $\mathbb{P}_{\theta G\theta^{-1},\varphi}$.
\end{proposition}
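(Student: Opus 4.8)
The statement is meant to be read off from the discussion just above it, so the plan is to package that discussion into an honest morphism of schemes. The first step is to run the machinery of Sections~\ref{galois} and \ref{families} with $\varphi$ replaced by $\psi_G\colon X\to\mathbb{P}Q_G^\vee$ (legitimate by Remark~\ref{general}, since $\psi_G$ need not be an embedding), the line bundle taken to be $\pi_G^*\mathcal{O}(1)$ and the linear system taken to be the \emph{complete} one $Q_G=H^0(X,\pi_G^*\mathcal{O}(1))$. Since here the ambient bundle \emph{is} $\pi_G^*\mathcal{O}(1)$, Remark~\ref{stab} applies verbatim and identifies the locus of Galois subspaces for $\psi_G$ whose Galois group is conjugate to $G$ by $\mathrm{Aut}^0(X)$ with $\mathrm{Stab}_{\mathrm{Aut}^0(X)}(\pi_G^*\mathcal{O}(1))/N(G)$. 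Moreover any section $s$ producing such a subspace lies in $H^0\big(X,\pi_G^*\mathcal{O}(1)\otimes\pi_{\theta G\theta^{-1}}^*\mathcal{O}(-1)\big)$ for some $\theta\in\mathrm{Aut}^0(X)$; as $\mathrm{Aut}^0(X)$ acts trivially on the N\'eron--Severi group, that bundle is numerically trivial, so an effective divisor in it is zero and $s$ is nowhere vanishing. By Lemma~\ref{D(s)} (applied to $\psi_G$) the associated projection is then defined on all of $X$, hence its base locus is disjoint from $\psi_G(X)$; this yields the inclusion $\mathrm{Stab}_{\mathrm{Aut}^0(X)}(\pi_G^*\mathcal{O}(1))/N(G)\hookrightarrow\mathbf{G}_{X,\psi_G}^\varnothing$.

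Next I would build the comparison morphism by putting the intermediate-projection diagram drawn above into a family over $\mathbf{P}(\mathcal{F}_G)=\mathrm{Aut}^0(X)\times_{\mathbf{Pic}(X)}\mathbf{Div}_{\geq0}(X)$. The universal effective divisor on $\mathbf{Div}_{\geq0}(X)$ gives, after pullback, a universal section of a line bundle on $\mathbf{P}(\mathcal{F}_G)\times X$ whose restriction over $(\theta,D)$ is (up to scaling) a section $s$ of $\mathcal{L}\otimes\theta^*\pi_G^*\mathcal{O}(-1)$; feeding this into the relative form of Theorem~\ref{general theorem} produces, fibrewise, the factorization of $\pi_W\circ\varphi$ through $\psi_{\theta G\theta^{-1}}$, and hence the Galois subspace of $\psi_{\theta G\theta^{-1}}$ cut out by the intermediate projection. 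Over the locus where $\theta\in\mathrm{Stab}_{\mathrm{Aut}^0(X)}(\pi_G^*\mathcal{O}(1))$ one has $\pi_{\theta G\theta^{-1}}^*\mathcal{O}(1)\simeq\pi_G^*\mathcal{O}(1)$, so a choice of such an isomorphism identifies $\mathbb{P}Q_{\theta G\theta^{-1}}^\vee$ with $\mathbb{P}Q_G^\vee$ carrying $\psi_{\theta G\theta^{-1}}(X)$ to $\psi_G(X)$, and transports the intermediate subspace to a Galois subspace for $\psi_G$ disjoint from $\psi_G(X)$, i.e.\ a point of $\mathbf{G}_{X,\psi_G}^\varnothing$ lying in $\mathrm{Stab}_{\mathrm{Aut}^0(X)}(\pi_G^*\mathcal{O}(1))/N(G)$. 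One checks that the resulting assignment depends only on the $N(G)$-coset of $\theta$ (the identifications are projectively canonical and $N(G)$-equivariant), so it descends to a morphism out of $\mathbf{P}(\mathcal{E}_G)$; surjectivity onto $\mathrm{Stab}_{\mathrm{Aut}^0(X)}(\pi_G^*\mathcal{O}(1))/N(G)$ is immediate, since the class of $\theta$ is hit by any Galois subspace of $\varphi$ with group $\theta G\theta^{-1}$.

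For the fibres, over the point of $\mathbf{G}_{X,\psi_G}^\varnothing$ corresponding to the class of $\theta$ the fibre consists exactly of those Galois subspaces of $\varphi$ whose intermediate projection is the (unique) one attached to $\theta G\theta^{-1}$; by Proposition~\ref{Galoisgroup} and Remark~\ref{explicit} these are precisely the Galois subspaces of $\varphi$ with Galois group $\theta G\theta^{-1}$, that is $\mathbb{P}_{\theta G\theta^{-1},\varphi}$, and the fibre-product description of $\mathbf{P}(\mathcal{F}_G)$ together with Remark~\ref{explicit} upgrades this identification from sets to schemes.

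The step I expect to be the main obstacle is the second one: setting up the relative version of Theorem~\ref{general theorem} over the (possibly non-reduced, non-irreducible) base $\mathbf{P}(\mathcal{F}_G)$ so that the intermediate-projection construction is a genuine morphism of schemes, isolating cleanly the sub-locus on which it is naturally defined, and then carrying out the $N(G)$-equivariance bookkeeping and the scheme-theoretic (as opposed to merely set-theoretic) fibre computation. By contrast, the set-theoretic content of all three steps is essentially a re-reading of Proposition~\ref{Galoisgroup}, Remark~\ref{stab} and Lemma~\ref{D(s)}; the work is in making it functorial.
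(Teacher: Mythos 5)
Your overall route is the paper's own: the paper offers no proof of this proposition beyond the discussion immediately preceding it, and your first step --- running the machinery for $\psi_G$ with the complete system $Q_G$ via Remark \ref{general}, invoking Remark \ref{stab}, and using numerical triviality of $\pi_G^*\mathcal{O}(1)\otimes\pi_{\theta G\theta^{-1}}^*\mathcal{O}(-1)$ together with Lemma \ref{D(s)} to get disjointness from $\psi_G(X)$ --- is a correct and faithful fleshing-out of that discussion. Your diagnosis that the real work is relativizing Theorem \ref{general theorem} is also fair.

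There is, however, a genuine gap in your second step, which you paper over with the phrase ``over the locus where $\theta\in\mathrm{Stab}_{\mathrm{Aut}^0(X)}(\pi_G^*\mathcal{O}(1))$.'' The scheme $\mathbf{P}(\mathcal{E}_G)$ fibers over all of $\mathrm{Aut}^0(X)/N(G)$, with fiber $\mathbb{P}H^0(X,\mathcal{L}\otimes\theta^*\pi_G^*\mathcal{O}(-1))$ over $[\theta]$ when $V$ is complete; since $\mathcal{L}\otimes\theta^*\pi_G^*\mathcal{O}(-1)$ is in general not numerically trivial, these fibers are typically nonempty for $\theta$ far outside the stabilizer, and your construction assigns such points no image. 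Concretely, let $X=E$ be an elliptic curve embedded by a complete linear system of degree $5$ and let $G=\langle -1\rangle$: then $N(G)=\mathrm{Stab}_{E}(\pi_G^*\mathcal{O}(1))=E[2]$, so the proposed target $\mathrm{Stab}_{\mathrm{Aut}^0(X)}(\pi_G^*\mathcal{O}(1))/N(G)$ is a single point, while $\mathbf{P}(\mathcal{E}_G)$ is a $\mathbb{P}^2$-bundle over $E/E[2]$ with every fiber $\mathbb{P}_{\theta G\theta^{-1},\varphi}\simeq\mathbb{P}^2$ nonempty; a surjection from this threefold onto a point with fiber $\mathbb{P}^2$ is impossible. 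The obstruction is structural: for $\theta\notin\mathrm{Stab}$ the intermediate map $\psi_{\theta G\theta^{-1}}$ is defined by a line bundle not isomorphic to $\pi_G^*\mathcal{O}(1)$, so its Galois subspaces cannot be canonically transported into $\mathbf{G}_{X,\psi_G}^\varnothing$. The fiber description in the proposition is really that of the structure morphism $\mathbf{P}(\mathcal{E}_G)\to\mathrm{Aut}^0(X)/N(G)$, and only its restriction over $\mathrm{Stab}_{\mathrm{Aut}^0(X)}(\pi_G^*\mathcal{O}(1))/N(G)$ lands in $\mathbf{G}_{X,\psi_G}^\varnothing$. To complete your argument you must either prove the fibers outside that locus are empty (false in general, as above) or state explicitly that your morphism is only defined on, and the proposition only concerns, the part of $\mathbf{P}(\mathcal{E}_G)$ lying over $\mathrm{Stab}_{\mathrm{Aut}^0(X)}(\pi_G^*\mathcal{O}(1))/N(G)$.
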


\begin{rem}
We note that this is exactly the map $\Phi_{n,m}$ defined in \cite[Section 6]{AR} that was used in order to understand the families of Galois subspaces.
\end{rem}

Now, by the previous results, for every $W\in\mathbf{G}_{X,\varphi}^{\mathrm{aut}}$ with Galois group $G$, we get an element $[s_W]\in\mathbb{P}H^0(X,\mathcal{L}\otimes\pi_G^*\mathcal{O}(-1))$ such that $s_W\otimes H^0(X,\pi_G^*\mathcal{O}(1))^G\subseteq V$. Note that if $V$ is the complete linear system, then we get that there exists a rational linear map $\mathbb{P}^N\dashrightarrow\mathbb{P}Q_G^\vee$ given by $s_W$ (and the inclusion $s_W\otimes H^0(X,\mathcal{L}\otimes\pi_G^*\mathcal{O}(-1))\subseteq H^0(X,\mathcal{L})=V$) that makes the above diagram commute.

\section{Final examples/questions}\label{questions}

In this section we will show two examples that lead us to natural questions for future research.

\subsection{Example 1} Note that if $\mathrm{Aut}^0(X)$ is not compact, then the topology of $\mathbf{G}_{X,\varphi}^{\mathrm{aut}}$ can be quite interesting. For example, if $X=\mathbb{P}^1$ and $\varphi$ is the Veronese embedding of degree $d$, then for any $G\leq\mathrm{PGL}(2,k)=\mathrm{Aut}(\mathbb{P}^1)$ with $|G|\leq d$, we get a $\mathbb{P}^{d-|G|}$-bundle $\mathbf{P}(\mathcal{E}_G)$ over $\mathrm{PGL}(2,k)/N(G)$ embedded in $\mathbf{G}(d-2,d)$. Now since the base $\mathrm{PGL}(2,k)/N(G)$ is not compact, the image of $\mathbf{P}(\mathcal{E}_G)$ in $\mathbf{G}(d-2,d)$ is not compact. We observe, for example, that for $d=2$, 
\[\mathbf{G}_{\mathbb{P}^1,\varphi}^{\mathrm{aut}}=\mathbf{G}_{\mathbb{P}^1,\varphi}=\mathbf{G}(0,2)=\mathbb{P}^2=\mathbf{P}(\mathcal{E}_{\{1\}})\sqcup\mathbf{P}(\mathcal{E}_{\mathbb{Z}/2\mathbb{Z}}),\]
where $\mathbb{Z}/2\mathbb{Z}$ is the group generated by $[x:y]\mapsto[-x:y]$. Indeed, $\mathbf{P}(\mathcal{E}_{\{1\}})$ is just the degree 2 rational normal curve, and $\mathbf{P}(\mathcal{E}_{\mathbb{Z}/2\mathbb{Z}})$ is its complement. In particular, $\mathbf{P}(\mathcal{E}_{\{1\}})$ lies in the closure of $\mathbf{P}(\mathcal{E}_{\mathbb{Z}/2\mathbb{Z}})$.  It would be interesting to understand the topology of the families of Galois subspaces discovered in \cite[Theorem 6.3]{AR}. More generally, we ask:\\

\noindent\textbf{Question 1:} If $\mathrm{Aut}^0(X)$ is not proper over $k$, how can we describe the topology (i.e. irreducible components, etc.) of $\mathbf{G}_{X,\varphi}^{\mathrm{aut}}$?\\

\subsection{Example 2} If $A$ is an abelian variety of dimension $n$ and $\phi:A\hookrightarrow\mathbb{P}^N$ is a non-degenerate embedding, then if $\mathbf{G}_{X,\varphi}^{\mathrm{aut}}\neq\varnothing$, by \cite[Theorem 1.2]{Auffarth} $A$ must be isogenous to the self product of an elliptic curve. Moreover, by the main result of \cite{ALA}, if $G\leq\mathrm{Aut}(A)$ is a finite subgroup such that $A/G\simeq\mathbb{P}^{n}$ and $T\unlhd G$ is the subgroup of $G$ that consists of translations, then $A/T$ is \textit{isomorphic} to the self-product of an elliptic curve. It can be proven that $G/T$ acts on $A/T$ fixing the origin (although this is far from being trivial), and again by the main result of \cite{ALA}, $G/T$ must be isomorphic (i.e. conjugate) to one of six possible groups. In what follows we will stick with the self-product of an elliptic curve $E$. We have that 
\[\mathrm{Aut}(E^n)/\mathrm{Aut}^0(E^n)\simeq \mathrm{GL}(n,\mathrm{End}(E))\]
If $G\leq \mathrm{Aut}(E^n)$ fixes the origin and is such that $E^n/G\simeq\mathbb{P}^n$, and $\varphi:E^n\hookrightarrow\mathbb{P}^N$ is an embedding, then by Theorem \ref{main} we get a projective bundle $\mathbf{P}(\mathcal{E}_G)$ over a quotient of $E^n$ that represents all Galois subspaces for $\varphi$ with Galois group a conjugate of $G$ by an element of $\mathrm{Aut}^0(E^n)=E^n$. However, we could also conjugate $G$ by an element $\sigma\in\mathrm{GL}(n,\mathrm{End}(E))$ and obtain a different projective bundle $\mathbf{P}(\mathcal{E}_{\sigma G\sigma^{-1}})$. After having worked out several examples, we would like to conjecture the following:\\

\noindent\textbf{Conjecture:} If $E$ is an elliptic curve, $\varphi:E^n\hookrightarrow\mathbb{P}^N$ is a non-degenerate embedding, and $G\leq\mathrm{Aut}(E^n)$ fixes the origin and is such that $E^n/G\simeq\mathbb{P}^n$, then the set
\[\{\sigma\in\mathrm{GL}(n,\mathrm{End}(E)):\mathbf{P}(\mathcal{E}_{\sigma G\sigma^{-1}})\neq\varnothing\}\]
is finite.\\

In particular, an affirmative answer to this conjecture would imply that if $A$ is any abelian variety and $\varphi:A\hookrightarrow\mathbb{P}^N$ is a non-degenerate embedding, then $\mathbf{G}_{A,\varphi}^{\mathrm{aut}}$ is a subvariety of $\mathbb{G}(N-n-1,N)$ (i.e. it has finitely many components). This leads us to our last question:\\

\noindent\textbf{Question 2:} Does there exist a projective variety $X$ and a non-degenerate embedding $\varphi:X\hookrightarrow\mathbb{P}^N$ such that $\mathbf{G}_{X,\varphi}^{\mathrm{aut}}$ is not of finite type?


\begin{thebibliography}{999999}

\bibitem[AK75]{Kleiman} A. Altman, S. Kleiman. \textit{Joins of schemes, linear projections.} Compositio Math., 31 (1975), no. 3, 309--343.

\bibitem[Auf17]{Auffarth} R. Auffarth. \textit{A note on Galois embeddings of abelian varieties.} Manuscripta Math. 154 (3-4), 2017, 279--284.

\bibitem[ALA20]{ALA} R. Auffarth, G. Lucchini Arteche. \textit{Smooth quotients of abelian varieties by finite groups.} Ann. Sc. Norm. Super. Pisa Cl. Sci. (5) 21 (2020), 673--694.

\bibitem[AR19]{AR} R. Auffarth, S. Rahausen. \textit{Galois subspaces for the rational normal curve.} Commun. Algebra 49 (2021), 3635--3644.

\bibitem[AR21]{AuffRah} R. Auffarth, S. Rahausen. \textit{Galois subspaces for smooth projective curves.} J. Algebra, 572 (2021), 146--162.


\bibitem[BLR90]{Neron} S. Bosch, W. Lütkebohmert, M. Raynaud. \textit{Néron models}. Ergebnisse der Mathematik und ihrer Grenzgebiete (3) [Results in Mathematics and Related Areas (3)], 21. Springer-Verlag, Berlin, 1990.

\bibitem[EGAII]{EGAII} A. Grothendieck, J. Dieudonn\'e. \textit{El\'ements de G\'eom\'etrie Alg\'ebrique}. Publ. Math. No. 8, IHES, (1961) (cited EGA II).

\bibitem[FT14]{FT} S. Fukasawa, T. Takahashi. \textit{Galois points for a normal hypersurface}. Trans. Amer. Math. Soc. 366 (2014), 1639--1658.

\bibitem[Fuk09]{Fuk} S. Fukasawa. \textit{Galois points for a plane curve in arbitrary characteristic}. Geom. Dedicata, 139 (2009), 211--218.

\bibitem[Kle05]{Kleiman2} S. Kleiman. \textit{The Picard scheme. Fundamental algebraic geometry.} 235-321, Math. Surveys Monogr., 123, Amer. Math. Soc., Providence, RI, 2005. 

\bibitem[KLT21]{KLT} G. Korchm\'aros, S. Lia, M. Timpanella. \textit{Curves with more than one inner Galois point.} J. Algebra, 566 (2021), 374--404.

\bibitem[SGA1]{SGA} Rev\^etements \'etales et groupe fondamental (French), S\'eminaire de G\'eom\'etrie Alg\'ebrique du Bois Marie 1960-1961 (SGA 1). Dirig\'e par Alexandre Grothendieck. Augment\'e de deux expos\'es de Mich\`ele Raynaud. Lecture Notes in Mathematics, Vol. 224. Springer-Verlag, Berlin-New York, 1971.

\bibitem[Tak12]{Tak} T. Takahashi. \textit{Galois morphism computing the gonality of a curve on a Hirzebruch surface}. J. Pure Appl. Algebra, 216 (2012), 12--19.

\bibitem[Yos01]{Yoshi2} H. Yoshihara. \textit{Function field theory of plane curves by dual curves}. J. Algebra, 239 (2001), 340--355.

\bibitem[Yos03]{Yoshi3} H. Yoshihara. \textit{Galois points for smooth hypersurfaces}. J. Algebra, 264 (2003), 520--534.

\bibitem[Yos07]{Yoshi} H. Yoshihara. \textit{Galois embedding of algebraic variety and its application to abelian surface}. Rend. Semin. Mat. Univ. Padova 117 (2007), 69-85.

\bibitem[YF23]{YFwebpage} H. Yoshihara, S. Fukasawa. \textit{List of open problems.} https://sites.google.com/sci.kj.yamagata-u.ac.jp/fukasawa-lab/open-questions-english.






\end{thebibliography}
\end{document}